\documentclass[11pt,reqno]{amsart}
\usepackage{color}
\usepackage[english]{babel}
\usepackage{amsfonts}
\usepackage{amsmath}
\usepackage{amsthm}
\usepackage{amssymb}
\usepackage[misc]{ifsym}
\usepackage{bbm}
\usepackage{mathrsfs}
\usepackage{esint}
\usepackage{faktor}
\usepackage[utf8]{inputenc}
\usepackage{afterpage}
\usepackage[left=2.9cm,right=2.9cm,top=2.8cm,bottom=2.8cm]{geometry}
\usepackage{graphicx}
\usepackage{enumitem}
\usepackage[dvipsnames]{xcolor}
\usepackage[colorlinks=true,urlcolor=blue,citecolor=blue,linkcolor=blue,linktocpage,pdfpagelabels,bookmarksnumbered,bookmarksopen]{hyperref}

\newcommand{\doi}[1]{\url{https://doi.org/#1}}
\newcommand{\N}{\mathbb{N}}

\newcommand{\R}{\mathbb{R}}
\newcommand{\D}{\mathcal{D}}

\renewcommand{\L}{\mathscr{L}}
\newcommand{\Div}{\mathrm{div} \, }
\newcommand{\Jac}{\mathrm{Jac} \, }
\newcommand{\dx}{\, {\rm d} x}

\newcommand{\dt}{\, {\rm d} t}

\newcommand{\dzeta}{\, {\rm d} \zeta}
\newcommand{\eps}{\varepsilon}
\newcommand{\loc}{{\rm loc}}

\newcommand{\Lip}{{\rm Lip}}

\renewcommand{\phi}{\varphi}

\newtheorem{lemma}{Lemma}
\newtheorem{thm}[lemma]{Theorem}
\newtheorem{prop}[lemma]{Proposition}
\newtheorem{cor}[lemma]{Corollary}

\theoremstyle{definition}

\newtheorem{rmk}[lemma]{Remark}
\newtheorem{ex}[lemma]{Example}

\DeclareMathOperator*{\esssup}{ess \, sup}
\DeclareMathOperator*{\essinf}{ess \, inf}
\DeclareMathOperator*{\supp}{supp}
\DeclareMathOperator*{\diam}{diam}

\begin{document}
\title[On singular $p$-Laplacian problems with discontinuous convection terms]{On singular $p$-Laplacian problems \\ with discontinuous convection terms}

\author[U. Guarnotta]{Umberto Guarnotta}
\address[U. Guarnotta]{Dipartimento di Matematica e Informatica, Università degli Studi di Catania, Viale A. Doria 6,
95125 Catania, Italy}
\email{umberto.guarnotta@unict.it}

\author[S.A. Marano]{Salvatore A. Marano}
\address[S.A. Marano]{Dipartimento di Matematica e Informatica, Universit\`a degli Studi di Catania, Viale A. Doria 6, 95125 Catania, Italy}
\email{samarano@unict.it}

\begin{abstract}
The existence of positive strong solutions to a homogeneous Dirichlet $p$-Laplacian problem, with reaction sum of a both singular at zero and highly discontinuous nonlinearity and of a discontinuous convection term, is established. Locality arguments, based on suitable measure theoretical results (see Section 3), are employed.
\end{abstract}

\maketitle

{
\let\thefootnote\relax
\footnote{{\bf{MSC 2020}}: 35J50, 35J75, 35J99.}
\footnote{{\bf{Keywords}}: Dirichlet problem; $p$-Laplacian; singular term; discontinuous nonlinearity; convection term; strong solution.}
\footnote{\Letter\quad Corresponding author: Umberto Guarnotta (umberto.guarnotta@unict.it).}
}
\setcounter{footnote}{0}

\section{Introduction}
This paper treats the existence of solutions to the following problem:
\begin{equation}\label{prob}
\tag{P}
\left\{
\begin{alignedat}{2}
-\Delta_p u & =f(u)+g(\nabla u)\quad && \mbox{in}\;\;\Omega,\\
u & >0\quad && \mbox{in}\;\;\Omega,\\
u & =0\quad && \mbox{on}\;\;\partial\Omega,
\end{alignedat}
\right.
\end{equation}
where $\Omega\subseteq\R^N$, $N\geq 2$, is a bounded domain with a $C^2$ boundary $\partial\Omega$, $1<p<N$,  while $f:\R^+\to\R^+_0$ and $g:\R^N\to\R^+_0$ are Borel-measurable functions satisfying the hypotheses below. Denote by $\D_f\subseteq\R^+$ and $\D_g\subseteq\R^N$ the sets of discontinuity points of $f$ and $g$, respectively. Moreover, set
$$\underline{f}(s):=\lim_{\delta\to 0^+}\essinf_{|t-s|<\delta} f(t),\quad
\overline{f}(s):=\lim_{\delta\to 0^+}\esssup_{|t-s|<\delta} f(t)\quad\forall\, s\in\R^+,$$
$$ \underline{g}(\xi):=\lim_{\delta\to 0^+}\essinf_{|\eta-\xi|<\delta} g(\eta),\quad \overline{g}(\xi):=\lim_{\delta\to 0^+}\esssup_{|\eta-\xi|<\delta} g(\eta)\quad\forall\, \xi\in\R^N.$$
We will posit the following conditions.
\begin{enumerate}[label={$({\rm H}_f)$(\roman*)},ref={$({\rm H}_f)${\rm(\roman*)}}]
\item \label{fbound}
$f\in L^\infty_\loc(\R^+)$.
\item \label{fsing}
There exists $\gamma\in(0,1)$ such that $\displaystyle{\limsup_{s\to0^+}s^\gamma f(s)<+\infty}$.
\item \label{fdisc}
$\D_f$ is $\L^1$-negligible, namely $|\D_f|_1=0$.
\item \label{fzeros}
If $\underline{f}(s)=0$ for some $s\in\R^+$, then $f(s)=0$.
\end{enumerate}
\begin{enumerate}[label={$({\rm H}_g)$(\roman*)},ref={$({\rm H}_g)${\rm(\roman*)}}]
\item \label{gbound}
$g\in L^\infty_\loc(\R^N)$.
\item \label{gdisc}
All projections $(\D_g)_i$ of $\D_g$ on the coordinate axes are $\L^1$-negligible, i.e.,
$$|(\D_g)_i|_1=0,\quad i=1,2,\ldots,N.$$
\item \label{gzeros}
If $\underline{g}(\xi)=0$ for some $\xi\in\R^N$, then $g(\xi)=0$.
\end{enumerate}

Let $\lambda_1$ be the first eigenvalue of $(-\Delta_p,W^{1,p}_0(\Omega))$. We will also assume that
\begin{equation}\label{hypatzero}
\tag{${\rm C}_0$}
\mbox{either}\quad\liminf_{s\to 0^+}\frac{f(s)}{s^{p-1}}>\lambda_1\quad
\mbox{or}\quad\liminf_{|\xi|\to 0} g(\xi) > 0,
\end{equation}
as well as
\begin{equation}\label{hypatinfty}
\tag{${\rm C}_\infty$}
\limsup_{s\to+\infty}\frac{f(s)}{s^{p-1}}+\lambda_1^{\frac{p-1}{p}}\limsup_{|\xi|\to\infty}\frac{g(\xi)}{|\xi|^{p-1}}<\lambda_1.
\end{equation}
Hypothesis \eqref{hypatzero} regulates the behavior of $f$ and $g$ at zero while \eqref{hypatinfty} pertains their growth rates at infinity.

%\begin{rmk}
%\label{nojumping}
Surprisingly enough, the presence of convection terms facilitates the existence of solutions to coercive problems. In fact, concerning the non-singular and variational (i.e., $g\equiv0$) framework, a standard case where \eqref{prob} admits solutions is that of `jumping' nonlinearities, namely
$$\liminf_{s\to 0^+}\frac{f(s)}{s^{p-1}}>\lambda_1>
\limsup_{s\to+\infty}\frac{f(s)}{s^{p-1}}.$$
This condition looks quite natural, because if
\begin{equation}\label{nonjumpf}
\frac{f(s)}{s^{p-1}}<\lambda_1\quad\forall\, s>0
\end{equation}
then the existence of a non-trivial solution $\tilde{u}\in W^{1,p}_0(\Omega)$ would imply
$$\|\nabla\tilde{u}\|_p^p=\int_\Omega f(\tilde{u})\tilde{u}\dx<
\lambda_1 \|\tilde{u}\|_p^p\, ,$$
against the variational characterization of $\lambda_1$. Nevertheless, reactions $f$ fulfilling \eqref{nonjumpf} are allowed in our result, provided $\displaystyle{\liminf_{|\xi|\to 0} g(\xi)>0}$; cf. \eqref{hypatzero}.\\
From a technical point of view, assumptions \eqref{hypatzero}--\eqref{hypatinfty} lead to construct a sub-solution that depends on both $f$ and $g$, not merely on $f$ as in, e.g., \cite{LMZ,GMM,GG,BG} (see also \cite{GM0,GMMou} for systems). Accordingly, the so-called freezing technique (namely, fix the gradient term as $g(\nabla v)$ for some function $v$, solve the $v$-dependent problem, and then `unfreeze' $v$ via fixed-point arguments; cf. for instance \cite{GMM}) is no more usable, whence standard variational methods (that were applied on the frozen problems) have to be replaced by more general monotonicity techniques.
%\end{rmk}

Our main result reads as follows.
\begin{thm}\label{mainthm}
Let $({\rm H}_f)$, $({\rm H}_g)$, \eqref{hypatzero}, and \eqref{hypatinfty} be satisfied. Then \eqref{prob} admits a strong solution $u\in C^{1,\alpha}_0(\overline{\Omega})$ for some $\alpha\in(0,1)$.
\end{thm}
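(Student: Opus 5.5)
We will proceed in three stages: first a regularized problem with multivalued (Filippov-type) reaction, then a priori estimates and a uniform positive sub-solution, and finally a locality argument that turns the multivalued solution into a genuine strong solution of \eqref{prob}.

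\emph{Step 1 (convexified approximating problem).} Fix $\eps>0$ and replace $f$ by $f_\eps(s):=f(\max\{s,\eps\})$ (cut-off of the singularity). We then consider the differential inclusion
\[
-\Delta_p u\in [\underline{f_\eps}(u),\overline{f_\eps}(u)]+[\underline{g}(\nabla u),\overline{g}(\nabla u)]\quad\mbox{in }\Omega,\qquad u=0\mbox{ on }\partial\Omega .
\]
The set-valued maps $s\mapsto[\underline f,\overline f]$ and $\xi\mapsto[\underline g,\overline g]$ are upper semicontinuous with compact convex values and locally bounded by \ref{fbound}, \ref{gbound}. Hence the Nemytskii operator $u\mapsto \{w\in L^{p'}: w\in[\underline{f_\eps}(u),\overline{f_\eps}(u)]+[\underline g(\nabla u),\overline g(\nabla u)]\ \text{a.e.}\}$ has closed graph in $W^{1,p}_0\times L^{p'}$ with respect to strong convergence of $u$ and weak convergence of $w$. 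Adding $-\Delta_p$ yields a pseudomonotone multivalued operator, and coercivity follows from \eqref{hypatinfty}. Indeed, by Poincaré and Hölder,
\[
\int g(\nabla u)u\le (c+\delta)\lambda_1^{-1/p}\|\nabla u\|_p^{p}+C,
\]
so \eqref{hypatinfty} makes the total growth constant strictly less than $1$ relative to $\|\nabla u\|_p^p$. A surjectivity theorem for pseudomonotone multivalued maps then gives a solution $u_\eps$.

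\emph{Step 2 (uniform bounds and sub-solution).} The coercivity estimate is $\eps$-independent, since $f_\eps\le Cs^{-\gamma}+$ growth, with $\gamma<1$ from \ref{fsing}. Moser iteration then yields an $L^\infty$ bound. Next we build a sub-solution $\underline u=\sigma\phi_1$ with $\sigma$ small. In the first alternative of \eqref{hypatzero} we have $-\Delta_p\underline u=\lambda_1\sigma^{p-1}\phi_1^{p-1}\le \underline f(\underline u)$. In the second alternative $\underline g(\nabla \underline u)\ge c_0>0$ wherever $|\nabla\underline u|$ is small, while near $\partial\Omega$ we have $|\nabla\phi_1|\ge c$ and $\lambda_1\sigma^{p-1}\phi_1^{p-1}$ is tiny. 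This forces us to modify $\underline u$ into a solution of $-\Delta_p w=\eta\chi$ with $\eta$ small, so that $-\Delta_p w\le \eta \le$ the $g$-lower bound near zero gradients. This is where both $f$ and $g$ enter. A comparison argument (weak comparison for $-\Delta_p$ with a nonnegative right-hand side) should give $u_\eps\ge \underline u$; this uses \ref{fzeros}, \ref{gzeros} to control zeros. Once $u_\eps\ge c\,d(x)$, the singular term is bounded by $C d^{-\gamma}$, and Lieberman-type $C^{1,\alpha}$ regularity for right-hand sides bounded by $d^{-\gamma}$ with $\gamma<1$ gives uniform $C^{1,\alpha}$ bounds. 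Letting $\eps\to0$ by Ascoli and using the closed graph, we obtain $u\in C^{1,\alpha}_0$ solving the inclusion without the cut-off.

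\emph{Step 3 (locality; the main obstacle).} It remains to show that $-\Delta_p u=f(u)+g(\nabla u)$ a.e. Since $u\in W^{2,p}_{\loc}$-type strong regularity (or via $-\Delta_p u\in L^\infty_\loc$), the measure theoretical results of Section 3 should be usable as follows. On $u^{-1}(\D_f)$, which has $|\D_f|_1=0$ by \ref{fdisc}, we get $\nabla u=0$ a.e. Similarly, on $\{\nabla u\in\D_g\}$ each $\partial_i u$ takes values in the null set $(\D_g)_i$ from \ref{gdisc}, so its gradient $\nabla\partial_iu$ vanishes a.e. there, giving $D^2u=0$ a.e. and hence $\Delta_p u=0$ a.e. On such sets the inclusion gives $0\in[\underline f,\overline f]+[\underline g,\overline g]$, so $\underline f(u)=\underline g(\nabla u)=0$ by nonnegativity, and \ref{fzeros}, \ref{gzeros} give $f(u)=g(\nabla u)=0$, matching $-\Delta_pu=0$. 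Off these sets $f$ and $g$ are continuous, so the intervals collapse. The delicate point is justifying second-order differentiability, i.e.\ $|\nabla u|^{p-2}\nabla u\in W^{1,2}_\loc$ or $u\in W^{2,2}_\loc$ away from critical points, so that the Stampacchia-type locality applies to $\nabla u$. We expect this to be the main difficulty, and we would rely on the Section 3 results for exactly this.
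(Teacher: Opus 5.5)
Your Filippov-type convexification plus multivalued pseudo-monotone theory is a legitimate alternative to the paper's mollification. There is, however, a genuine gap in Step 2: the comparison $u_\eps\ge\underline u$. Everything afterwards rests on it: the $\eps$-uniform bound of the singular term by $Cd^{-\gamma}$, hence the uniform $C^{1,\alpha}$ estimates, and the positivity of the limit.

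Because you solve the \emph{untruncated} problem, testing with $(\underline u-u_\eps)_+$ requires the reaction, evaluated along $u_\eps$, to dominate $-\Delta_p\underline u=\lambda_1\underline u^{p-1}$ a.e.\ on $\{u_\eps<\underline u\}$. Neither alternative of \eqref{hypatzero} gives this.
\begin{itemize}
\item In the $g$-alternative you only know $w_\eps\ge\underline g(\nabla u_\eps)$. Nothing controls $|\nabla u_\eps|$ on that set, and $g$ may vanish for large gradients.
\item In the $f$-alternative the best you get is $\underline{f_\eps}(u_\eps)\ge\lambda_1\max\{u_\eps,\eps\}^{p-1}$. This lies below $\lambda_1\underline u^{p-1}$ wherever $\max\{u_\eps,\eps\}<\underline u$: the nonlinearity is increasing near $0$, so weak comparison runs the wrong way.
\end{itemize}
Nonnegativity of the right-hand side only yields $u_\eps\ge0$. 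Also, \ref{fzeros} and \ref{gzeros} play no role here; they are needed only in the final locality step.

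The missing idea is to truncate at the sub-solution \emph{before} solving the approximate problem, as the paper does. It replaces $f$ by $\tilde f(x,s)=f(\max\{\underline u(x),s\})$, then mollifies in $s$. It mollifies $g$ first and evaluates it at $\nabla\max\{u,\underline u\}$. Then on $\{u_\eps<\underline u\}$ the reaction involves only values of $f$ on $[\underline u,2\delta)$ and of $g$ near $\nabla\underline u\in B_\delta(0)$. Both are $\ge\lambda_1\underline u^{p-1}$, and strict monotonicity of $A_p$ gives $u_\eps\ge\underline u$. The same truncation tames the singularity, since $\tilde f\le c\,\underline u^{-\gamma}\le Cd^{-\gamma}$. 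So no cut-off at $\eps$ is needed; the singular part is handled through Hardy's inequality and a pseudo-monotone splitting. Such a truncation could be grafted onto your multivalued framework, but without it Step 2 does not go through.

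Two further remarks.

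First, your worry about the sub-solution in the $g$-alternative is unfounded. Take $\underline u=\sigma\phi_1$ with $\sigma$ small. Then $\|\nabla\underline u\|_\infty<\delta$ and $\|\underline u\|_\infty<(\theta/\lambda_1)^{1/(p-1)}$ on all of $\Omega$. Hence $g(\nabla\underline u)\ge\theta\ge\lambda_1\underline u^{p-1}$ everywhere, and the behaviour of $|\nabla\phi_1|$ near $\partial\Omega$ is irrelevant.

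Second, Step 3 is not the real obstacle, and it does not need $u\in W^{2,1}_\loc$. Once $-\Delta_pu\in L^\infty_\loc$, Cianchi--Maz'ya gives $\Phi:=|\nabla u|^{p-2}\nabla u\in W^{1,2}_\loc$.
\begin{itemize}
\item On $u^{-1}(\D_f)$ you have $\Phi=0$ a.e. Stampacchia applied to the components of $\Phi$ then gives $\Delta_pu=0$ a.e.\ there. You only stated $\nabla u=0$, which is not yet enough.
\item On $(\nabla u)^{-1}(\D_g)$ the paper proves that $\Psi(\xi)=|\xi|^{p-2}\xi$, locally Lipschitz off the origin, maps $\D_g$ to a set with null coordinate projections (Theorem \ref{nullproperty}). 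So each component of $\Phi$ takes values in a null set there, and Corollary \ref{nulldiv} gives $\Div\Phi=0$ a.e.
\item Your own route can also be completed, since $u\in C^1$. On the open set $\{\nabla u\neq0\}$ one has $\nabla u=\Psi^{-1}(\Phi)\in W^{1,2}_\loc$ by the chain rule, so your argument on $\partial_iu$ applies there. On $\{\nabla u=0\}$, again $\Phi=0$.
\end{itemize}
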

Recall that $u\in W^{1,p}_0(\Omega)$ is a strong solution of \eqref{prob} provided $u>0$ in $\Omega$, 
$$|\nabla u|^{p-2}\nabla u\in W^{1,1}_\loc(\Omega;\R^N)$$
(whence $\Delta_p u\in L^1_\loc(\Omega)$), and $-\Delta_p(x)=f(u(x))+g(\nabla u(x))$ for almost all $x\in\Omega$.

%\begin{rmk}
If $g\equiv 0$ then the above result becomes \cite[Theorem 1.3]{GM}; see \cite{GM} and its bibliography for further information concerning either singular or discontinuous elliptic equations. Otherwise, as far as we know, Theorem \ref{mainthm} represents the first contribution on both singular and convective problems with discontinuous nonlinearities. 
%\end{rmk}

Let us shortly outline the main steps of the proof.
\begin{itemize}
\item A sub-solution $\underline{u}\in C^{1,\alpha}_0(\overline{\Omega})$ to \eqref{prob} is constructed via \eqref{hypatzero}.
\item We truncate and regularize both $f$ and $g$, but in a precise way (cf. Remark \ref{ordertruncreg}). Let $F_\eps$ and $G_\eps$, $0<\eps<1$, be the associated superposition operators.
\item A standard result on pseudo-monotone operators yields a solution $u_\eps\in W^{1,p}_0(\Omega)$ to the the equation $-\Delta_p u=F_\eps(u)+G_\eps(u)$ in $W^{-1,p'}(\Omega)$.
\item By the weak comparison principle one has $u_\eps\geq \underline{u}$.
\item Using results from nonlinear regularity theory ensures that $u_\eps\in C^{1,\beta}_0(\overline{\Omega})$.
\item We let $\eps\to0^+$ to get a strong solution $u\in C^{1,\alpha}_0(\overline{\Omega})$ of \eqref{prob} through locality arguments based on suitable measure theoretical results (see Section \ref{measteorres}).
\end{itemize}
Recall that if $X,Y\subseteq L^1_\loc(\Omega)$ are two Banach spaces and $A$ is a map from $X$ to the space of distributions over $\Omega$, then $A$ is called strongly local of $X$ into $Y$ provided every distributional solution $u\in X$ of $A(u)=v$, with $v\in Y$, satisfies
$$ A(u) = 0 \quad\mbox{a.e.\,in}\;\; u^{-1}(c) \quad \mbox{for all}\;\; c\in\R.$$
Some meaningful specific cases are pointed out in \cite[Example 1.5]{GM}. As a sample, by Proposition 2.4 of \cite{GM}, the $p$-Lapacian turns out strongly local from $W^{1,p}(\Omega)$ into $L^2_\loc(\Omega)$.

%Throughout the paper, $c,C,C^*,\tilde{C}$, etc. denote positive constants, whoso dependencies are specified when necessary.
%
%
%
\section{Basic definitions and auxiliary results}\label{prel}
Given two topological spaces $X$ and $Y$, the symbol $X\hookrightarrow Y$ means that $X$ continuously embeds in $Y$.

Let $X$ be a real Banach space with topological dual $X^*$ and duality brackets $\langle\cdot,\cdot\rangle$. A function $A:X\to X^*$ is called:
\begin{itemize}
\item \emph{pseudo-monotone} when
\begin{equation*}
x_n\rightharpoonup x\;\;\mbox{in $X$,}\;\;\limsup_{n\to+\infty}\langle A(x_n),x_n-x\rangle \le 0\implies \liminf_{n\to+\infty}\langle A(x_n),x_n-z\rangle
\geq\langle A(x),x-z\rangle\;\;\forall\, z\in X.   
\end{equation*}
\item \emph{of type $(\mathrm{S})_+$} provided
\begin{equation*}
x_n\rightharpoonup x\;\;\mbox{in $X$,}\;\;\limsup_{n\to+\infty}\langle A(x_n),x_n-x\rangle \le 0\implies x_n\to x\;\;\mbox{in $X$.}   
\end{equation*}
\end{itemize}
For $A,B:X\to X^*$, we know \cite[Section 6]{F} that:
\begin{itemize}
\item[$({\rm P}_1)$] $A,B$ pseudo-monotone $\implies$ $A+B$ pseudo-monotone.
\item[$({\rm P}_2)$] $X$ reflexive, $A$ continuous and of type $(\mathrm{S})_+$ $\implies$ $A$ pseudo-monotone.
\end{itemize}
Moreover (cf. \cite[Lemma 2.2]{GG}), 
\begin{itemize}
\item[$({\rm P}_3)$] $A$ fulfills condition $(\mathrm{S})_+$, $B$ compact $\implies$ $A+B$ of type $(\mathrm{S})_+$.
\end{itemize}
Throughout the paper, $\L^h$ indicates the Lebesgue $\sigma$-algebra of $\R^h$, $|E|_h$ is the $h$-dimensional Lebesgue measure of $E\in\L^h$, while
$${\rm diam}(E):=\sup\{|x-y|:x,y\in E\}$$
provided $E\subseteq\R^h$, $E\neq\emptyset$. Further,
$$t_\pm:=\max\{\pm t,0\},\quad t\in\R.$$
If $X(\Omega)$ is a real-valued function space on $\Omega$ and $u,v\in X(\Omega)$ then $u\leq v$ means $u(x)\leq v(x)$ a.e.\,in $\Omega$. Analogously for $u<v$, etc. To shorten notation, define 
$$\{u\geq v\}:=\{x\in \Omega :u(x)\geq v(x)\},\quad X(\Omega)_+:=\{w\in X(\Omega): w\geq 0\}.$$
We denote by $p'$ the conjugate exponent of $p$ and $p^*:=\frac{Np}{N-p}$. The Sobolev space $W^{1,p}_0(\Omega)$ is equipped with the Poincaré norm 
\begin{equation*}
\Vert u\Vert_{1,p}:=\Vert |\nabla u|\Vert_p,\quad u\in W^{1,p}_0(\Omega),
\end{equation*}
where, as usual,
\begin{equation*}
\Vert v\Vert_q:=\left\{ 
\begin{array}{ll}
\left(\int_{\Omega }|v(x)|^q\dx\right)^{\frac{1}{q}} & \text{ if }1\leq q<+\infty, \\ 
\phantom{} &  \\ 
\underset{x\in\Omega}{\esssup}\, |v(x)| & \text{ when } q=+\infty.
\end{array}
\right.
\end{equation*}
Let $W^{-1,p'}(\Omega):=W^{1,p}_0(\Omega )^*$. Theorems 9.16 and 6.4 of \cite{Br} ensure that:
\begin{itemize}
\item[$({\rm P}_4)$] $L^{p'}(\Omega) \hookrightarrow W^{-1,p'}(\Omega)$ compactly.
\end{itemize}
Arguing as in \cite[Lemma 2.1]{BG} easily yields:
\begin{itemize}
\item[$({\rm P}_5)$] $u_n\rightharpoonup u$ in $W^{1,p}_0(\Omega)$ $\implies$ $|u_n-u| \rightharpoonup 0$ in $W^{1,p}_0(\Omega)$.
\end{itemize}
Moreover, $v\in L^q(\Omega)\cap W^{-1,p'}(\Omega)$, $q\geq 1$, means that the linear map $u\mapsto \int_\Omega uv\dx$ generated by $v\in L^q(\Omega)$ turns out continuous on $W^{1,p}_0(\Omega)$.

The next three results, where $d:\overline{\Omega}\to[0,+\infty)$ indicates the distance function of $\Omega$, i.e.,
$$d(x):={\rm dist}(x,\partial\Omega)\quad\forall\,x\in\overline{\Omega},$$
will be used in Section \ref{proofmainres}. The second is known as Hardy-Sobolev's inequality.
\begin{prop}[see \cite{GM}, Proposition 2.1]
\label{distsumm}
If $0<\gamma<1<q<\frac{1}{\gamma}$ then $d^{-\gamma}\in L^q(\Omega)$.
\end{prop}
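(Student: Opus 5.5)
The plan is to reduce the integrability of $d^{-\gamma q}$ near $\partial\Omega$ to a one-dimensional integral, using the regularity of the boundary. Since $q<\frac{1}{\gamma}$, we have $\gamma q<1$, and we must show
$$\int_\Omega d(x)^{-\gamma q}\dx<+\infty .$$

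First, split $\Omega$ into an interior part and a boundary layer. Fix $\delta>0$ small and write $\Omega=\Omega_\delta\cup\Omega^\delta$, where
$$\Omega_\delta:=\{x\in\Omega: d(x)\geq\delta\},\qquad \Omega^\delta:=\{x\in\Omega: d(x)<\delta\}.$$
On $\Omega_\delta$ the integrand is bounded by $\delta^{-\gamma q}$. Because $\Omega$ is bounded, the contribution of $\Omega_\delta$ is therefore finite.

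Next, handle the layer $\Omega^\delta$ using the $C^2$ regularity of $\partial\Omega$. For $\delta$ small (below the reach of $\partial\Omega$), the map
$$\Phi:\partial\Omega\times(0,\delta)\to\Omega^\delta,\qquad \Phi(y,t):=y-t\,\nu(y),$$
is a $C^1$ diffeomorphism, where $\nu$ is the outer unit normal. Moreover, $d(\Phi(y,t))=t$, and the Jacobian of $\Phi$ is bounded above by a constant $C$ depending on the curvatures of $\partial\Omega$. Changing variables then gives
$$\int_{\Omega^\delta} d^{-\gamma q}\dx\le C\,\mathcal{H}^{N-1}(\partial\Omega)\int_0^\delta t^{-\gamma q}\dt<+\infty,$$
and the last integral is finite precisely because $\gamma q<1$.

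The only delicate point is the tubular-neighbourhood change of variables. A more elementary alternative avoids it: the coarea formula with $|\nabla d|=1$ a.e. gives
$$\int_{\Omega^\delta}d^{-\gamma q}\dx=\int_0^\delta t^{-\gamma q}\,\mathcal{H}^{N-1}(\{d=t\})\dt .$$
It then suffices to know that the level sets satisfy $\mathcal{H}^{N-1}(\{d=t\})\le C$ for small $t$, which again follows from the $C^2$ boundary via local graph representations and a partition of unity. Either way, the conclusion is $d^{-\gamma}\in L^q(\Omega)$.
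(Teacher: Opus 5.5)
Your proof is correct. The paper gives no argument of its own for this statement; it simply quotes Proposition 2.1 of \cite{GM}, so there is no in-text proof to compare with. Your tubular-neighbourhood computation is the standard self-contained justification: discard $\{d\ge\delta\}$, parametrise the layer by $\Phi(y,t)=y-t\nu(y)$ with $d(\Phi(y,t))=t$ and bounded Jacobian, and reduce to $\int_0^\delta t^{-\gamma q}\dt<+\infty$. Only $\gamma q<1$ is used; the condition $q>1$ plays no role.

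Your coarea ``alternative'' is not really more elementary. The uniform bound on $\mathcal{H}^{N-1}(\{d=t\})$ is most naturally obtained from $\{d=t\}=\Phi(\partial\Omega\times\{t\})$, i.e.\ from the same tubular structure.

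If you want a route that asks less of the boundary, use the layer-cake formula
$$\int_\Omega d^{-\gamma q}\dx=\int_0^{+\infty}\left|\left\{x\in\Omega: d(x)<\tau^{-1/(\gamma q)}\right\}\right|_N{\rm d}\tau$$
together with $|\{x\in\Omega:d(x)<\eps\}|_N\le C\eps$. The integrand is bounded by $|\Omega|_N$ near $\tau=0$ and by $C\tau^{-1/(\gamma q)}$ for large $\tau$, which is integrable precisely when $\gamma q<1$. The boundary-layer estimate already holds for bounded Lipschitz domains, since locally $\{d<\eps\}$ lies within vertical distance $\eps\sqrt{1+L^2}$ of a Lipschitz graph. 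So this route needs no curvature bounds and no change of variables.
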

\begin{prop}[cf. \cite{OK}, Theorem 21.3]
\label{hardysobolev}
Let $0<\gamma<1<p<N$. Then there exists $K>0$ such that
$$\int_\Omega d^{-\gamma}|u|\dx\leq K\|u\|_{1,p}\quad\forall\, u\in W^{1,p}_0(\Omega).$$
\end{prop}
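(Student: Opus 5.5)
Since $u\in W^{1,p}_0(\Omega)$, the plan is to split the integral by means of a suitable H\"older pairing. The singular weight is handled through Proposition \ref{distsumm}, and $u$ itself is controlled by the Sobolev embedding.

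Fix $0<\gamma<1<p<N$. The aim is to choose exponents $q>1$ and $r\geq 1$ with $\frac1q+\frac1r=1$ such that
\[
q<\frac1\gamma \quad\text{and}\quad r\le p^*,
\]
where $p^*=\frac{Np}{N-p}$ as defined above. If such a pair exists, H\"older's inequality together with Proposition \ref{distsumm} gives
\[
\int_\Omega d^{-\gamma}|u|\dx\le \|d^{-\gamma}\|_q\,\|u\|_r .
\]
Since $\Omega$ is bounded and $r\le p^*$, we have $\|u\|_r\le C\|u\|_{p^*}$. The Sobolev inequality on $W^{1,p}_0(\Omega)$ then gives $\|u\|_{p^*}\le S\|u\|_{1,p}$. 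Combining these, the statement holds with $K:=CS\|d^{-\gamma}\|_q$.

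The only step needing care is whether admissible exponents exist. The condition $r\le p^*$ is equivalent to $\frac1q=1-\frac1r\ge 1-\frac1{p^*}$. So we need some $q$ with
\[
1-\frac{1}{p^*}\le\frac1q<1 \quad\text{and}\quad \frac1q>\gamma .
\]
Such a $q$ exists for every $\gamma\in(0,1)$, because $1-\frac1{p^*}<1$. For instance, take $\frac1q:=\max\{\gamma,1-\frac1{p^*}\}$ increased slightly, keeping it strictly below $1$. This gives $\gamma<\frac1q<1$, so $1<q<\frac1\gamma$, and $r\le p^*$ holds as required.

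A more refined argument, such as the one in \cite{OK} using the genuine Hardy inequality $\|u/d\|_p\le C\|\nabla u\|_p$, is therefore not needed here. It would only matter for $\gamma$ close to $1$ with an $L^p$-type target, and the claim concerns just the $L^1$-pairing with $d^{-\gamma}$, $\gamma<1$.
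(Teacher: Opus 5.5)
Your proof has a genuine gap: the exponent condition is reversed. Since $\frac1r=1-\frac1q$, the requirement $r\le p^*$ means $\frac1r\ge\frac1{p^*}$, i.e.\ $\frac1q\le 1-\frac1{p^*}$ (equivalently $q\ge (p^*)'$), and not $\frac1q\ge 1-\frac1{p^*}$. Combined with $q<\frac1\gamma$, you therefore need $\gamma<\frac1q\le 1-\frac1{p^*}$. This is possible only when $\gamma<1-\frac1{p^*}$.

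Your explicit choice $\frac1q>\max\{\gamma,1-\frac1{p^*}\}$ gives $\frac1r<\frac1{p^*}$, i.e.\ $r>p^*$. For such $r$, $W^{1,p}_0(\Omega)$ does not embed in $L^r(\Omega)$, so $\|u\|_r$ is not controlled by $\|u\|_{1,p}$. For instance, with $N=3$, $p=\frac32$ (so $p^*=3$) and $\gamma=\frac45$, you would need $\frac32\le q<\frac54$. So your argument covers only $\gamma<1-\frac1{p^*}$, whereas the paper needs every $\gamma\in(0,1)$ allowed by $({\rm H}_f)$(ii).

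A smarter choice of exponents cannot repair this. Indeed, $d^{-\gamma}\in L^q(\Omega)$ exactly when $q<\frac1\gamma$, and $p^*$ is the largest $r$ with $W^{1,p}_0(\Omega)\hookrightarrow L^r(\Omega)$. The H\"older--Sobolev pairing never uses the fact that $u$ vanishes on $\partial\Omega$, and that is what compensates for the singularity of $d^{-\gamma}$.

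The missing ingredient is exactly the Hardy inequality you dismiss in your last paragraph. The paper gives no argument of its own; it simply cites the Hardy-type inequality of Opic--Kufner (Theorem 21.3). On a bounded domain with $C^2$ (even Lipschitz) boundary and $1<p<\infty$, one has $\|u/d\|_p\le C_H\|u\|_{1,p}$ for all $u\in W^{1,p}_0(\Omega)$. Since $1-\gamma>0$ and $d$ is bounded,
\[
\int_\Omega d^{-\gamma}|u|\dx=\int_\Omega d^{1-\gamma}\,\frac{|u|}{d}\dx\le\|d\|_\infty^{1-\gamma}\,|\Omega|_N^{1/p'}\,\|u/d\|_p\le K\|u\|_{1,p}
\]
for every $\gamma\in(0,1]$, without even using $p<N$. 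Your H\"older--Sobolev argument survives only as a proof of the special case $\gamma<1-\frac1{p^*}$.
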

\begin{prop}[see \cite{GM}, Proposition 2.3]
\label{distcomparison}
Suppose $u\in C^{1,\alpha}_0(\overline{\Omega })$. Then:
\begin{itemize}
\item[$({\rm a}_1)$] $\big\Vert d^{-1} u\big\Vert_{C^{0,\beta}(\overline{\Omega})}\leq C\Vert u\Vert_{C^{1,\alpha}(\overline{\Omega})}$, where $\beta:=\frac{\alpha}{\alpha+1}$ and $C>0$ does not depend on $u$. 
\item[$({\rm a}_2)$] If $u>0$ in $\Omega$ then there exists $l>0$ such that $u(x)\geq ld(x)$ for every $x\in\Omega$.
\end{itemize}
\end{prop}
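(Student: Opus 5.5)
The statement to prove is Proposition \ref{distcomparison}: a H\"older bound for $d^{-1}u$ and a linear lower bound $u\ge l\,d$ for positive $u\in C^{1,\alpha}_0(\overline{\Omega})$. Both parts rest on the $C^2$ regularity of $\partial\Omega$. I will use two of its standard consequences. First, there is $\delta>0$ such that on the collar $\Omega_\delta:=\{x\in\Omega: d(x)<\delta\}$ every point has a unique nearest boundary point $\pi(x)$. Second, $x=\pi(x)+d(x)\nu(\pi(x))$, where $\nu$ is the inner unit normal, and $d\in C^2(\overline{\Omega_\delta})$ with $|\nabla d|=1$ there.

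For $({\rm a}_1)$ I split $\Omega$ into the collar $\Omega_\delta$ and the interior $\Omega\setminus\Omega_{\delta/2}$. On the interior, $d\ge\delta/2$ is Lipschitz, so $d^{-1}$ is bounded and Lipschitz there. Hence $\|d^{-1}u\|_{C^{0,\beta}}\le C\|u\|_{C^{1}}$ on that set.

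In the collar, fix $x$ and set $y=\pi(x)$. Since $u(y)=0$, integrating along the normal segment gives
$$\frac{u(x)}{d(x)}=\int_0^1\nabla u\big(y+t\,d(x)\nu(y)\big)\cdot\nu(y)\,dt .$$
The right-hand side is bounded by $\|\nabla u\|_\infty$, which gives the sup bound. For the H\"older seminorm take $x,x'$ in the collar with $r:=|x-x'|$, and distinguish two cases.
\begin{itemize}
\item If $r\ge\min\{d(x),d(x')\}^{1+\alpha}$, say $d(x)\le r^{1/(1+\alpha)}$, I compare each quotient with the boundary value of the normal derivative. The integral formula gives $\big|u(x)/d(x)-\partial_\nu u(\pi(x))\big|\le C\|u\|_{C^{1,\alpha}}d(x)^\alpha$. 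If also $d(x')\le 2r^{1/(1+\alpha)}$, the same estimate holds at $x'$. In addition, $|\pi(x)-\pi(x')|\le C\,r$ and $\nabla u$ together with $\nu\circ\pi$ are H\"older, so the two boundary values differ by at most $C r^{\alpha}$. Altogether the difference of quotients is $\le C\,r^{\alpha/(1+\alpha)}=C\,r^\beta$. If instead $d(x')> 2r^{1/(1+\alpha)}$, then $d(x')-d(x)\ge r^{1/(1+\alpha)}>r$ for $r<1$, which is impossible because $d$ is $1$-Lipschitz; for $r$ bounded below the sup bound suffices.
\item If $r<\min\{d(x),d(x')\}^{1+\alpha}$, I write
$$\frac{u(x)}{d(x)}-\frac{u(x')}{d(x')}=\frac{u(x)-u(x')}{d(x)}+u(x')\,\frac{d(x')-d(x)}{d(x)d(x')} .$$
I use $|u(x)-u(x')|\le\|\nabla u\|_\infty r$, $|u(x')|\le\|\nabla u\|_\infty d(x')$ and $|d(x)-d(x')|\le r$. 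This bounds the difference by $C\,r/d(x)\le C\,r^{1-1/(1+\alpha)}=C\,r^{\beta}$.
\end{itemize}
Choosing $\beta=\alpha/(1+\alpha)$ is exactly what balances the two regimes. This bookkeeping is the main, though routine, obstacle.

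For $({\rm a}_2)$ I note that $u>0$ in $\Omega$ and $u=0$ on $\partial\Omega$. The strong maximum principle does not apply in this generality, so I instead use the quotient from $({\rm a}_1)$. The function $w:=d^{-1}u$ extends continuously to $\overline{\Omega}$ with $w=\partial_\nu u$ on $\partial\Omega$. It is positive inside $\Omega$, so $u\ge l\,d$ with $l=\min w$ provided $\min_{\partial\Omega}\partial_\nu u>0$.

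In the paper's use, $u$ solves $-\Delta_p u\ge0$, and then Hopf's lemma (V\'azquez) gives exactly this positivity. I suspect the cited Proposition 2.3 assumes this implicitly, or that the intended hypothesis includes $\partial_\nu u>0$; in a literal reading, $u=d^2$ is a counterexample. I would therefore state the proof under the Hopf condition: $w$ is continuous and positive on the compact set $\overline{\Omega}$, hence has a positive minimum $l$.
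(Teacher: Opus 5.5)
The paper does not prove this proposition; it quotes it from \cite[Proposition 2.3]{GM}. Your argument is therefore a self-contained substitute rather than a variant of an in-paper proof, and for $({\rm a}_1)$ it is correct. The normal-segment identity gives two facts: the sup bound $|d^{-1}u|\le\|\nabla u\|_\infty$, and $|d^{-1}u(x)-\partial_\nu u(\pi(x))|\le C\|u\|_{C^{1,\alpha}(\overline{\Omega})}\,d(x)^\alpha$. Splitting at the threshold $\min\{d(x),d(x')\}=r^{1/(1+\alpha)}$ balances $d^\alpha$ against $r/d$, which shows exactly where $\beta=\alpha/(1+\alpha)$ comes from. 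This transparency, together with the explicit boundary trace $d^{-1}u=\partial_\nu u$ on $\partial\Omega$, is what your route gains over the bare citation.

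The steps you leave implicit are routine:
\begin{itemize}
\item Take $\delta<1$. Then $r<d(x)^{1+\alpha}$ forces $r<d(x)$, so $[x,x']\subseteq B(x,d(x))\subseteq\Omega$. This is what justifies $|u(x)-u(x')|\le\|\nabla u\|_\infty r$ when $\Omega$ is not convex.
\item The bound $|\pi(x)-\pi(x')|\le Cr$ holds for small $r$ because $\pi(x)=x-d(x)\nabla d(x)$ is $C^1$ on a two-sided tubular neighbourhood of $\partial\Omega$; for $r$ bounded below it is trivial.
\item Pairs with $d(x)<\delta/2$ and $d(x')\geq\delta$ satisfy $|x-x'|>\delta/2$, so the collar and interior estimates glue via the sup bound.
\end{itemize}

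For $({\rm a}_2)$ your diagnosis is right: with only $u>0$ in $\Omega$, the statement as written here is false, and a Hopf-type condition on $\partial\Omega$ must be added. Your counterexample, however, is not admissible. The function $d$ is not differentiable at its maximum points in $\Omega$: there the gradient would have to vanish, whereas $|\nabla d|=1$ at every differentiability point of $d$ in $\Omega$. Since $d>0$ at those points, $d^2$ is not differentiable there either; for instance, $(1-|x|)^2$ is not $C^1$ at the centre of the unit ball.

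Take instead $u:=\varphi_1^2\in C^{1,\alpha}_0(\overline{\Omega})$. It is positive in $\Omega$, while $u/d=\varphi_1\cdot(\varphi_1/d)\le\|\nabla\varphi_1\|_\infty\,\varphi_1\to0$ at $\partial\Omega$.

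Under the added hypothesis $\partial_\nu u>0$ on $\partial\Omega$ (inner normal), your compactness argument giving $l=\min_{\overline{\Omega}}d^{-1}u>0$ is correct. It also covers the only place where the paper invokes $({\rm a}_2)$, namely $\underline{u}=k\varphi_1$ in the proof of Lemma \ref{fepsest}. There V\'azquez's strong maximum principle supplies the required boundary sign, so the paper's results are unaffected.
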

Here, as usual,
\begin{equation*}
C^{1,\tau}_0(\overline{\Omega}):=\{u\in C^{1,\tau}(\overline{\Omega}): u\lfloor_{\partial\Omega}=0\},\quad 0<\tau<1.
\end{equation*}
Let $A_p:W^{1,p}_0(\Omega)\to W^{-1,p'}(\Omega)$ be the nonlinear operator stemming from the negative $p$-Laplacian, i.e.,
\begin{equation*}
\langle A_p(u),v\rangle:=\int_\Omega|\nabla u|^{p-2}\nabla u\nabla v\dx\quad\forall\, u,v\in W^{1,p}_0(\Omega).
\end{equation*}
We know \cite{MMP} that
\begin{itemize}
\item[$({\rm P}_6)$] $A_p$ is bounded, continuous, strictly monotone, and of type $({\rm S})_+$\,;
\item[$({\rm P}_7)$] If $\lambda_1$ denotes the first eigenvalue of $(-\Delta_p, W^{1,p}_0(\Omega))$ then there exists a unique eigenfunction $\varphi_1$ associated with $\lambda_1$ and enjoying the properties
\begin{equation}\label{firsteigenf}
\phi_1\in C^{1,\alpha}_0(\overline{\Omega})\;\;\mbox{for some}\;\; 0<\alpha<1,\quad \phi_1>0\;\;\mbox{in}\;\;\Omega,\quad
\Vert\varphi_1\Vert_p=1. 
\end{equation}
\end{itemize} 
\section{Measure-theoretical results}\label{measteorres}
Given any $A\subseteq \R^N$, $x':=(x'_1,\ldots,x'_{N-1})\in\R^{N-1}$, and $i\in\{1,\ldots,N\}$, we introduce the following sets:
\begin{itemize}
    \item {\it projection of $A$ on the hyperplane perpendicular to the coordinate axis $\hat{e}_i$:}
$$\pi_i(A):=\{x'\in\R^{N-1}:(x'_1,\ldots,x'_{i-1},t,x'_i,\ldots,x'_{N-1})\in A\mbox{ for some $t\in\R$}\}.$$
    \item {\it $i$-th section of $A$ at $x'$:}
$$\sigma_i(A,x'):=\{t\in\R: \, (x'_1,\ldots,x'_{i-1},t,x'_i,\ldots,x'_{N-1})\in A\}.$$
  \item{\it projection of $A$ onto the coordinate axis $\hat{e}_i$:}
$$A_i:=\bigcup_{x'\in\pi_i(A)}\sigma_i(A,x').$$
    \item {\it $i$-th fiber of $A$ at $x'$:}
$$ A_i(x'):=\{x\in A:x=(x'_1,\ldots,x'_{i-1},t,x'_i,\ldots,x'_{N-1})\mbox{ for some $t\in\R$}\}. $$
\end{itemize}

\begin{thm}\label{localitythm}
Let $\varphi\in W^{1,1}_\loc(\R^N)$, let $D\subseteq\R^N$ be nonempty, and let $i\in\{1,\ldots,N\}$. Suppose that $|\varphi(D_i(x'))|_1=0$ for $\L^{N-1}$-almost all $x'\in\pi_i(D)$. Then $\partial_i\varphi=0$ $\L^N$-a.e.\,in $D$.
\end{thm}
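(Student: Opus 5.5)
Without loss of generality we take $i=N$ and write points of $\R^N$ as $(x',t)$ with $x'\in\R^{N-1}$, $t\in\R$. The plan is to reduce the statement to a one-dimensional fact along lines parallel to $\hat e_N$, and then reassemble it with Fubini's theorem.

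First we use the absolutely continuous on lines characterization of Sobolev functions. Since $\varphi\in W^{1,1}_\loc(\R^N)$, it has a representative $\tilde\varphi$ with the following property: for $\L^{N-1}$-a.e.\ $x'$, the map $t\mapsto\tilde\varphi(x',t)$ is locally absolutely continuous, and its classical derivative equals $\partial_N\varphi(x',t)$ for $\L^1$-a.e.\ $t$. We interpret the hypothesis $|\varphi(D_N(x'))|_1=0$ for this representative. Changing $\varphi$ on a null set may change the images $\varphi(D_N(x'))$, so this convention has to be fixed. If needed, one can first shrink $D$ by an $\L^N$-null set, which does not affect the conclusion.

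Next comes the one-dimensional lemma. Let $h:\R\to\R$ be locally absolutely continuous and $E\subseteq\R$ with $|h(E)|_1=0$. Then $h'=0$ a.e.\ on $E$. We prove it by the area formula (Banach indicatrix) for absolutely continuous functions: for every measurable $F$, $\int_F|h'|\,dt=\int_\R \#(F\cap h^{-1}(y))\,dy$. If $E$ is not measurable, replace it by a measurable hull, or argue with outer measure. Alternatively, use the classical fact that $|h(F)|_1\ge$ ... is not directly available, so the area formula route is the one to take. Applied with $F=E$ (or with $F\subseteq E$ measurable, $|E\setminus F|$ null up to hull), the integrand on the right vanishes off the null set $h(E)$. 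Hence $\int_E|h'|=0$, which gives the lemma. The delicate point is measurability of $E$. Here we can use that $h'$ exists on a set of full measure and invoke the standard lemma: if $h$ is differentiable at each point of $E$ with $|h'|\ge c>0$, then $|h(E)|^*_1\ge$ ... rather, the one we need is $|h(E)|^*_1\ge$ ... Vitali-type estimate $|E\cap\{|h'|\geq c\}|^*\le c^{-1}|h(E\cap\{|h'|\ge c\})|^*$, valid without measurability. This settles the non-measurable case.

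Then we conclude by Fubini. For a.e.\ $x'\in\pi_N(D)$, the section $\sigma_N(D,x')$ plays the role of $E$ and $h=\tilde\varphi(x',\cdot)$. We have $h(E)=\varphi(D_N(x'))$, which is null by hypothesis, so $\partial_N\varphi(x',\cdot)=0$ a.e.\ on $\sigma_N(D,x')$. For $x'\notin\pi_N(D)$ the section is empty. Integrating over $x'$ gives that $\{x\in D:\partial_N\varphi(x)\neq0\}$ has null sections for a.e.\ $x'$. Provided this set is measurable (or, in general, contained in a measurable set with null sections, obtained from a measurable hull of $D$ and Tonelli), it is $\L^N$-null.

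The main obstacle is exactly this measurability and representative issue: $D$ is an arbitrary set, and images under $\varphi$ depend on the representative. I would handle it by working with outer measure throughout, using the Vitali covering estimate above on each line. For the final step, I would apply Tonelli to the measurable set $\{\partial_N\varphi\ne0\}\cap H$, where $H\supseteq D$ is a measurable hull, and use the fact that the sections of $H$ agree with those of $D$ up to null sets for a.e.\ $x'$. Should that fail, I would restrict to measurable $D$, which is the case needed later for $D=(\nabla u)^{-1}(\D_g)$-type sets.
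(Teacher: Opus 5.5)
Your route is the paper's. You restrict $\varphi$ to lines parallel to $\hat e_i$, use the one-dimensional fact that $|h(E)|_1=0$ forces $h'=0$ a.e.\ on $E$, and reassemble with Fubini--Tonelli. The paper takes the one-dimensional fact from \cite[Theorem 1]{SV}; your area-formula proof is a valid substitute. Your handling of representatives improves on the paper, which tacitly treats every $\psi_{x'}$ as absolutely continuous. Removing the null set $\{\varphi\neq\tilde\varphi\}$ from $D$ only shrinks the images $\varphi(D_i(x'))$. So the hypothesis passes to the ACL representative, and the theorem does not depend on the representative.

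Your treatment of non-measurable $D$ contains two false claims, and it cannot be repaired, because the statement itself fails there. First, the estimate $|A|^*_1\le c^{-1}|h(A)|^*_1$ for $A=E\cap\{|h'|\ge c\}$ is false once $h$ folds: $h(t)=t^2$, $A=[-2,-1]\cup[1,2]$, $c=2$ gives $2\le 3/2$. You do not need it. If $B\supseteq h(E)$ is a Borel null set, then $h^{-1}(B)$ is a measurable set containing $E$ with null image, so your area-formula argument already gives $h'=0$ a.e.\ on $E$ for arbitrary $E$. The one-dimensional step is therefore fine.

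The real obstruction is the reassembly. Your claim that the sections of a measurable hull $H\supseteq D$ agree with those of $D$ up to null sets for a.e.\ $x'$ is false, and no other device can replace it. Here is a counterexample to the statement for non-measurable $D$:
\begin{itemize}
\item Enumerate the closed subsets of $\R^N$ of positive measure as $\{F_\alpha\}_{\alpha<\mathfrak{c}}$.
\item By transfinite recursion, pick $p_\alpha\in F_\alpha$ on a line parallel to $\hat e_N$ that contains no $p_\beta$, $\beta<\alpha$. This is possible because, by Fubini, $F_\alpha$ has sections of positive length on $\mathfrak{c}$ such lines, while fewer than $\mathfrak{c}$ lines have been used.
\item Then $D:=\{p_\alpha\}$ meets each such line in at most one point. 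Yet $D$ lies in no $\L^N$-null set, since it meets every closed set of positive measure.
\item With $\varphi(x):=x_N$, every $\varphi(D_N(x'))$ is a singleton, while $\partial_N\varphi\equiv1$ on $D$.
\end{itemize}
Hence measurability of $D$ is a necessary hypothesis, not a fallback. The paper's estimate $|\Theta|_N\le|E'|_N+\int|Z(x')|_1\,{\rm d}x'$ is Tonelli applied to $\Theta$, so it also uses measurability tacitly. This is harmless in the application: there $D=(\nabla u)^{-1}(\D_g)$ is Borel, because $\D_g$ is an $F_\sigma$ set and $\nabla u$ is continuous. Assume measurability of $D$ from the outset. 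Then a.e.\ section of $D$ is measurable and your one-dimensional lemma applies on each. Tonelli then applies to the measurable set $D\cap\{\partial_N\varphi\neq0\}$, and your proof is complete.
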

\begin{proof}
Pick $E\subseteq\R^{N-1}$ satisfying $|E|_{N-1}=0$ and
\begin{equation}\label{neglect1}
|\varphi(D_i(x'))|_1=0 \quad\forall\, x'\in\pi_i(D)\setminus E.
\end{equation}
Given any $x'\in\pi_i(D)\setminus E$, consider the function $\psi_{x'}:\R\to\R$ defined by
$$\psi_{x'}(t):=\varphi(x'_1,\ldots,x'_{i-1},t,x'_i,\ldots,x'_{N-1}), \quad t\in\R. $$
Since $\psi_{x'}(\sigma_i(D,x'))=\varphi(D_i(x'))$, from \eqref{neglect1} it follows
$$|\psi_{x'}(\sigma_i(D,x'))|_1=0. $$
Thus, Theorem 1 of \cite{SV} applied to $\psi_{x'}$ yields $(\psi_{x'})'=0$ $\L^1$-a.e.\,in $\sigma_i(D,x')$. Noticing that $\psi_{x'}\in W^{1,1}_\loc(\R)\hookrightarrow AC_\loc(\R)$ (whence $\psi_{x'}$ is $\L^1$-a.e.\,differentiable) and that 
$$(\psi_{x'})'(t)=\partial_i\varphi(x'_1,\ldots,x'_{i-1},t,x'_i,\ldots,x'_{N-1})\quad
\mbox{for $\L^1$-almost all $t\in\R$,}$$
we can find a set $Z(x')\subseteq\R$ fulfilling
\begin{equation}\label{neglect2}
|Z(x')|_1=0,\quad\partial_i\varphi(x'_1,\ldots,x'_{i-1},t,x'_i,\ldots,x'_{N-1})=0\quad \forall\, t\in \sigma_i(D,x')\setminus Z(x').
\end{equation}
Now, put
$$\Theta:=\{x\in D:\mbox{either }\nexists\,\partial_i\varphi(x) \; \mbox{or } \partial_i\varphi(x)\neq 0\}, $$
where $\nexists\,\partial_i\varphi(x)$ means that $\varphi$ is not differentiable at $x$ along the direction $\hat{e}_i$. Since for every $x\in D$, $x=(x_1,\ldots,x_i,\ldots,x_N)$, one has both $x':=(x_1,\ldots,x_{i-1},x_{i+1},\ldots,x_N)\in\pi_i(D)$ and $x_i\in\sigma_i(D,x')$, setting
$$E':=\{(x_1,\ldots,x_N)\in\R^N:\,(x_1,\ldots,x_{i-1},x_{i+1},x_N)\in E\}$$
and using \eqref{neglect2} we deduce
$$ |\Theta|_N \leq |E'|_N+\int_{\pi_i(D)\setminus E} |Z(x')|_1 \dx'=0$$
because $|E|_{N-1}=0$. Therefore, $\partial_i\varphi=0$ $\L^N$-a.e.\,in $D$, as desired.
\end{proof}
An immediate consequence is the following
\begin{cor}\label{nulldiv}
If $\Phi:=(\varphi_1,\ldots,\varphi_N)\in W^{1,1}_\loc(\R^N;\R^N)$, $D\subseteq\R^N$ is nonempty, and
$$|\varphi_i(D_i(x'))|_1=0 \quad\mbox{for}\;\;\L^{N-1}\mbox{-almost all} \;\; x'\in\pi_i(D),\quad i=1,2,\ldots,N,$$
then $\Div\Phi=0$ $\L^N$-a.e.\,in $D$.
\end{cor}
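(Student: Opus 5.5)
The plan is to apply Theorem \ref{localitythm} separately to each component $\varphi_i$ of $\Phi$, with the direction index equal to the component index, and then sum. Since $\Phi\in W^{1,1}_\loc(\R^N;\R^N)$, every component satisfies $\varphi_i\in W^{1,1}_\loc(\R^N)$. Hence for fixed $i\in\{1,\ldots,N\}$ the hypotheses of Theorem \ref{localitythm} hold with $\varphi:=\varphi_i$, the given nonempty set $D$, and the same index $i$. Indeed, the assumption of the corollary states precisely that $|\varphi_i(D_i(x'))|_1=0$ for $\L^{N-1}$-almost all $x'\in\pi_i(D)$.

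Theorem \ref{localitythm} then yields, for each $i$, an $\L^N$-null set $\Theta_i\subseteq D$ such that $\partial_i\varphi_i(x)=0$ for every $x\in D\setminus\Theta_i$. Set $\Theta:=\bigcup_{i=1}^N\Theta_i$. As a finite union of null sets, it satisfies $|\Theta|_N=0$. At every point $x\in D\setminus\Theta$ all the diagonal partial derivatives vanish simultaneously. Since $\Div\Phi=\sum_{i=1}^N\partial_i\varphi_i$ holds $\L^N$-a.e., we obtain $\Div\Phi=0$ a.e.\ in $D$.

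There is one point requiring care, and it is mild. The weak derivative $\partial_i\varphi_i$ is only an equivalence class, whereas Theorem \ref{localitythm} is stated pointwise on $D$, where $D$ may even be non-measurable. The identity $\Div\Phi=\sum_i\partial_i\varphi_i$ should therefore be read through fixed representatives. Following the proof of Theorem \ref{localitythm}, I would take the representative of $\varphi_i$ that is absolutely continuous on almost every line parallel to $\hat e_i$, and use the classical partial derivative along those lines. This coincides a.e.\ with the weak derivative, so the exceptional sets remain null and the conclusion is unaffected. Apart from this bookkeeping the argument is immediate, and I expect no real obstacle.
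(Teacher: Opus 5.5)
Your proposal is correct and matches the paper, which calls the corollary an immediate consequence of Theorem \ref{localitythm}: apply the theorem to each component $\varphi_i$ in the direction $\hat{e}_i$ and sum the resulting a.e.\ identities $\partial_i\varphi_i=0$ on $D$. Your note on fixing the representative that is absolutely continuous along a.e.\ line parallel to $\hat{e}_i$ is sensible bookkeeping that the paper leaves implicit.
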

\begin{ex}\label{example}
Given $\D\subseteq\R^N$ nonempty and $u\in W^{2,1}_\loc(\R^N)$, put $D:=(\nabla u)^{-1}(\D)$, $\Phi:=\nabla u$. 
%Observe that $D_i(x')\subseteq D$ for every $x'\in\pi_i(D)$ and $i\in\{1,\ldots,N\}$.
One clearly has
$$\varphi_i(D)=\partial_i u((\nabla u)^{-1}(\D))=(\nabla u\cdot \hat{e}_i)((\nabla u)^{-1}(\D))=\D_i.$$
Since $D_i(x')\subseteq D$ for every $x'\in\pi_i(D)$, if $|\D_i|_1=0$, $i=1,\ldots,N$, then Corollary \ref{nulldiv} ensures that $\Delta u=\Div\Phi=0$ $\L^N$-a.e.\,in $D$.
\end{ex}
\begin{rmk}\label{degiorgietal}
Taking Example \ref{example} into account, we can state the following.

\textit{Let $\Phi\in W^{1,1}_\loc(\R^N;\R^M)$, $\Phi:=(\varphi_1,\ldots,\varphi_M)$,  and let $D\subseteq\R^N$ be nonempty. If $|\varphi_j(D)|_1=0$ for all $j\in\{1,\ldots,M\}$ then $\Jac\Phi=0$ $\L^N$-a.e.\,in $D$.}

This result is a consequence of Theorem \ref{localitythm}, since $D_i(x')\subseteq D$ for every $x'\in\pi_i(D)$. Anyway, its proof is fairly simpler than the one of Theorem \ref{localitythm}, because it suffices to apply \cite[Lemma 1]{DGBDL} to each $\varphi_j$. On the other hand, Theorem \ref{localitythm} furnishes a detailed proof of \cite[Lemma 1]{DGBDL}.
\end{rmk}
\begin{rmk}
%\label{counterexample}
Given $\Phi\in W^{1,1}_\loc(\R^N;\R^M)$ and $D\subseteq\R^N$ nonempty, one may wonder whether $|\Phi(D)|_M=0$ implies $\Jac\Phi=0$ $\L^N$-a.e.\,in $D$. It is readily seen that this turns out false in the vector case, i.e., $M\geq 2$. In fact, take $D:=\R^N$ and $\Phi\in W^{1,1}_\loc(\R^N;\R^M)$ defined by $\Phi(x_1,\ldots,x_N):=(x_1,0,\ldots,0)$. Then $\Phi(\R^N)=\R\times\{0\}^{M-1}$, which is $\L^M$-negligible, but $\partial_1 \varphi_1=1$ on $\R^N$.
\end{rmk}
\begin{thm}\label{nullproperty}
If $D\subseteq\R^N$ fulfills $|D_i|_1=0$ for every $i\in\{1,\ldots,N\}$ while $\Psi:\R^N\to\R^M$ belongs to $C^{0,1}_\loc(\R^N\setminus\{0\};\R^M)$, then $|\Psi(D)_j|_1=0$ whatever $j=1,\ldots,M$.
\end{thm}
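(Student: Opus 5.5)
Since every $x\in D$ has $x_i\in D_i$ for each $i$, we get $D\subseteq D_1\times\cdots\times D_N$. It therefore suffices to prove the claim with $D$ replaced by the product $P:=D_1\times\cdots\times D_N$, because $\Psi(D)_j\subseteq\Psi(P)_j$ for every $j$. Each factor $D_i$ is $\L^1$-negligible. The idea is to cover $P$ by many small cubes whose side lengths are adapted to the covers of the $D_i$. On each such box the Lipschitz property of the component $\psi_j$ bounds the length of the interval $\psi_j(\text{box})$ by a constant times the diameter of the box. Summing these lengths should then give an arbitrarily small total.

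The first step handles the singularity at $0$ and the unboundedness. Write $\R^N\setminus\{0\}=\bigcup_k K_k$ with $K_k:=\{1/k\le|x|\le k\}$ compact. By countable subadditivity of $\L^1$-outer measure, it is enough to show $|\psi_j(P\cap K_k)|_1=0$ for each $k$. The point $0$ itself contributes at most one value $\psi_j(0)$, which is negligible. On $K_k$ the map $\Psi$ is Lipschitz with some constant $L_k$, by a standard compactness argument for locally Lipschitz maps. Fix $k$ and $\eps>0$.

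The second step is the key one. A naive cover of $P\cap K_k$ by boxes $I^1_{m_1}\times\cdots\times I^N_{m_N}$, where $\{I^i_m\}_m$ covers $D_i$ with $\sum_m|I^i_m|_1<\eps$, fails. The image of a box has length about $L_k$ times its diameter, and summing diameters over all multi-indices $(m_1,\dots,m_N)$ need not be small, since it behaves like a product of $N$ sums, one of which is essentially a sum of lengths multiplied by the counts in the other directions. The fix is to subdivide.

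Each interval $I^i_m$ is split into subintervals of a common small length $h$. One cannot literally require all lengths to be multiples of $h$, so we first approximate. The set $D_i\cap[-k,k]$ is covered by finitely many dyadic intervals? Not finitely many in general, since $D_i$ need not be compact. To get around this, we use a countable union and a grid. Choose a dyadic level $n$ and let $G_n$ be the family of dyadic intervals of length $2^{-n}$. Then $D_i\subseteq U_i$ with $U_i$ open and $|U_i|_1<\eps$. The difficulty is that a single dyadic scale does not cover $U_i$ economically.

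A cleaner route is to replace $\Psi$ on cubes by a Lipschitz estimate over products of grid cells of varying scale. For a cube $Q$ of side $s$ we have $|\psi_j(Q)|_1\le L_k\sqrt N s$. Write $U_i$ as a disjoint union of dyadic intervals $J$. For $x\in P\cap K_k$, take the dyadic interval $J^i(x)$ containing $x_i$ for each $i$, and then the smallest among them, of side $s(x)$. The cube of side $s(x)$ containing $x$ is contained in $U_1\times\cdots\times U_N$. The cubes produced this way are dyadic cubes $Q$ of side $s$ for which some coordinate projection is a full Whitney-type interval $J$ of $U_i$ of length $s$, while the other projections lie in $U_{i'}\cap[-k,k]$.

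For fixed $i$ and fixed $J$, the number of such cubes is at most $(2k/s)^{N-1}$. This gives total length at most
\[
\sum_i\sum_J L_k\sqrt N\,|J|_1\,(2k/|J|_1)^{N-1},
\]
which blows up. Counting only cubes whose other projections meet $U_{i'}$ gives the bound $\prod_{i'\neq i}\bigl(|U_{i'}|_1/s+2\bigr)$, which is still problematic.

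\textbf{The main obstacle.} This cube-counting step is where I expect the real difficulty to lie. My first attempt would instead work coordinate by coordinate, in the spirit of Theorem \ref{localitythm}. I would try to show that each coordinate map $\psi_j$ restricted to $P\cap K_k$ has negligible image by moving from one point to another along coordinate directions one at a time, so that $\psi_j(P)$ is built from $N$ one-dimensional Lipschitz images of negligible sets. Such images are negligible because Lipschitz maps of one variable send null sets to null sets. Whether this reduction can be made rigorous is the crux of the proof.
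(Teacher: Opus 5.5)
\textbf{The step you leave open cannot be filled, because the statement is false.} Take $N=M=2$, let $C\subseteq[0,1]$ be the middle-thirds Cantor set, put $D:=C\times C$, and let $\Psi(x,y):=(x+y,\,y-x)$, which is linear and hence globally Lipschitz. Then $D_1=D_2=C$ are $\L^1$-null. Writing $C=\{\sum_{k\ge1}a_k3^{-k}:a_k\in\{0,2\}\}$, however, one gets
\[
\Psi(D)_1=C+C=\Bigl\{2\sum_{k\ge1}c_k3^{-k}:\ c_k\in\{0,1,2\}\Bigr\}=[0,2],
\]
and likewise $\Psi(D)_2=C-C=[-1,1]$. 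With $M=1$ and $\Psi(x,y)=x+y$ the same happens. Since $\Psi/\sqrt2$ is a rotation, having null coordinate projections is not even invariant under rotations. This also shows exactly where your coordinate-by-coordinate plan breaks down. The set $\psi_j(P)=\bigcup_{y\in D_2}\psi_j(D_1\times\{y\})$ is a union of null sets, each a locally Lipschitz one-variable image of $D_1$. But the union runs over the uncountable index set $D_2$, and controlling one fiber at a time can never control such a union.

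Your diagnosis of the naive product cover is correct, and it pinpoints the flaw in the paper's own proof. The paper covers $D$ by the rectangles $Q_k=\prod_iI^i_{h_i}$ and uses $\diam(Q_k)\le\sum_i|I^i_{h_i}|_1$. It then asserts in \eqref{measureest} that $\sum_k\diam(Q_k)\le\sum_i\sum_n|I^i_n|_1$. A fixed interval $I^i_n$, however, is the $i$-th factor of every $Q_k$ whose other indices vary freely. The correct bound is therefore $\sum_k\diam(Q_k)\le\sum_i\sum_n|I^i_n|_1\cdot\mathrm{card}\{k:h_i(k)=n\}$, and these multiplicities are typically infinite. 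Your Whitney-type count blowing up is the same phenomenon, not a defect of your particular cover.

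Only the degenerate case survives: each component $\psi_j$ depends on a single coordinate $x_i$ (e.g.\ $\Psi=\mathrm{id}$, the case $p=2$ in Example~\ref{example2}). Then $\psi_j(D)$ lies in a locally Lipschitz one-variable image of the null set $D_i$. For $\Psi(\xi)=|\xi|^{p-2}\xi$ with $p\neq2$, every component depends on all coordinates. Hence the paper's use of this theorem for $\Omega_g$ in the final proof is left without justification.
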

\begin{proof}
Firstly, suppose $D$ bounded and with ${\rm dist}(0,D)>0$. Thus, there exists a compact set  $K\subseteq\R^N\setminus\{0\}$ such that $D\subseteq K$ and ${\rm dist}(D,\R^N\setminus K)>0$. Since $\Psi\lfloor_K\in C^{0,1}(K;\R^M)$, we have
\begin{equation}\label{lipschitzballs}
\diam(\Psi(A))\leq \Lip_K(\Psi)\diam(A)\quad\mbox{for all nonempty}\;\; A\subseteq K,
\end{equation}
where $\Lip_K(\Psi)$ denotes the Lipschitz constant of $\Psi\lfloor_K$. Next, set $\overline{\eps}:=\frac{1}{N}{\rm dist}(D,\R^N\setminus K)$. Fix any $\eps \in(0,\overline{\eps})$ and $i\in\{1,\ldots,N\}$. The condition $|D_i|_1=0$ yields a sequence of open intervals $\{I_n^i\}_n$ satisfying
\begin{equation}\label{covering1}
D_i\subseteq\bigcup_{n=1}^\infty I_n^i \quad \mbox{and} \quad \sum_{n=1}^\infty |I_n^i|_1<\eps.
\end{equation}
Now, notice that
\begin{equation}\label{Dincl}
D\subseteq\prod_{i=1}^N D_i\subseteq\prod_{i=1}^N\bigcup_{n=1}^\infty I_n^i = \bigcup_{h_1,\ldots,h_N=1}^\infty \prod_{i=1}^N I_{h_i}^i    
\end{equation}
and consider the set
$$\Theta:=\left\{Q:=\prod_{i=1}^N I_{h_i}^i: \, (h_1,\ldots,h_N)\in\N^N, \; D\cap Q\neq\emptyset\right\},$$
which turns out countable, because so is $\N^N$. Then $\Theta=\{Q_k\}_k$, where $Q_k$ denotes the $N$-dimensional rectangle
$$ Q_k:=\prod_{i=1}^N I_{h_i}^i \quad\mbox{for some} \;\; (h_1,\ldots,h_N)\in\N^N. $$
From \eqref{Dincl} it evidently follows
$$D\subseteq\
%bigcup_{h_1,\ldots,h_N=1}^\infty \prod_{i=1}^N I_{h_i}^i=
\bigcup_{k=1}^\infty Q_k. $$
Through the triangular inequality we get
\begin{equation}\label{squarediam}
\diam(Q_k)=\diam\left(\prod_{i=1}^N I_{h_i}^i\right)\leq\sum_{i=1}^N \diam(I_{h_i}^i)
=\sum_{i=1}^N |I_{h_i}^i|_1.
\end{equation}
For every $N$-dimensional rectangle $Q_k$ there exists a unique $N$-dimensional ball $B_k\subseteq\R^N$ such that
\begin{equation}
\label{circumcircle}
Q_k\subseteq B_k \quad \mbox{and} \quad \diam(B_k)=\diam(Q_k),
\end{equation}
whence, in particular,
\begin{equation}\label{covering2}
D\subseteq \bigcup_{k=1}^\infty B_k.
\end{equation}
Let $x\in B_k$ and let $y_k\in D\cap Q_k \subseteq D\cap B_k$. Using \eqref{circumcircle}, \eqref{squarediam}, and \eqref{covering1} one arrives at
$${\rm dist}(x,D)\leq |x-y_k|\leq \diam(B_k)=\diam(Q_k)\leq\sum_{i=1}^N |I_{h_i}^i|_1
\leq \sum_{i=1}^N\sum_{n=1}^\infty |I_n^i|_1 < N\eps<N\overline{\eps}.$$
By the choice of $\overline{\eps}$, this entails $x\in\ K$. As $x$ was arbitrary,  $B_k\subseteq K$. If $\tilde{B}_k\subseteq\R^M$ denotes the convex hull of $\Psi(B_k)$ then, thanks to \eqref{lipschitzballs},
\begin{equation}\label{convexhull}
\diam(\tilde{B}_k)=\diam(\Psi(B_k))\leq\Lip_K(\Psi)\diam(B_k).
\end{equation}
From \eqref{covering2} we infer
$$\Psi(D) \subseteq \Psi\left(\bigcup_{k=1}^\infty B_k\right) = \bigcup_{k=1}^\infty \Psi(B_k) \subseteq \bigcup_{k=1}^\infty \tilde{B}_k$$
and, a fortiori,
\begin{equation}\label{covering3}
\Psi(D)_j\subseteq\left(\bigcup_{k=1}^\infty \tilde{B}_k\right)_j=
\bigcup_{k=1}^\infty (\tilde{B_k})_j\quad\forall\, j=1,\ldots,M.
\end{equation}
Each set $\tilde{I}_k^j:=(\tilde{B}_k)_j$ is an interval. Gathering \eqref{convexhull}, \eqref{circumcircle}, \eqref{squarediam}, and \eqref{covering1} together produces
\begin{equation}\label{measureest}
\begin{aligned}
\sum_{k=1}^\infty |\tilde{I}_k^j|_1 & = \sum_{k=1}^\infty \diam(\tilde{I}_k^j) =\sum_{k=1}^\infty\diam(\tilde{B}_k)\leq\Lip_K(\Psi)\sum_{k=1}^\infty\diam(B_k)\\
& = \Lip_K(\Psi)\sum_{k=1}^\infty \diam(Q_k) \leq \Lip_K(\Psi) \sum_{i=1}^N \sum_{n=1}^\infty |I_n^i|_1 < \Lip_K(\Psi)N\eps.
\end{aligned}
\end{equation}
So, by \eqref{covering3}--\eqref{measureest}, for every $\eps\in(0,\overline{\eps})$ there exists a sequence of intervals $\{\tilde{I}_k^j\}_k$ such that
$$\Psi(D)_j\subseteq\bigcup_{k=1}^\infty \tilde{I}_k^j\quad\mbox{and}\quad \sum_{k=1}^\infty |\tilde{I}_k^j|_1<C\eps, $$
where $C:=N\Lip_K(\Psi)$. This forces $|\Psi(D)_j|_1=0$ whatever $j=1,\ldots,M$.

Now, given a generic $D\subseteq\R^N$, consider a sequence $\{K_n\}_n$ of compact sets in $\R^N\setminus\{0\}$ fulfilling $K_n\nearrow \R^N\setminus\{0\}$ and fix $j\in\{1, \ldots,M\}$. Applying the previous argument to $D\cap K_n$ produces $|\Psi(D\cap K_n)_j|_1 = 0$ for all $n\in\N$, whence
\begin{equation*}\begin{aligned}
|\Psi(D)_j|_1 & \leq\left|\left(\Psi\left(\bigcup_{n=1}^\infty (D\cap K_n)
\cup\{0\}\right)\right)_j\right|_1
\leq |\Psi(0)_j|_1+\sum_{n=1}^\infty |\Psi(D\cap K_n)_j|_1=0,
\end{aligned}
\end{equation*}
as desired.
\end{proof}
\begin{ex}\label{example2}
Example \ref{example} can be extended to the $p$-Laplacian. In fact, let $\D\subseteq\R^N$ be nonempty and let $u\in W^{1,1}_\loc(\R^N)$ be such that the field $\Phi(u):=|\nabla u|^{p-2}\nabla u$ belongs to $W^{1,1}_\loc(\R^N;\R^N)$. Define $D:=(\nabla u)^{-1}(\D)$ as well as $\Psi(\xi):=|\xi|^{p-2}\xi$, $\xi\in\R^N$. Evidently, $\Psi\in C^{0,1}_\loc(\R^N\setminus\{0\};\R^N)$ and
$$\varphi_i(D)=(\Psi(\nabla u)\cdot\hat{e}_i)((\nabla u)^{-1}(\D))=\Psi(\D)_i. $$
Thus, if $|\D_i|_1=0$ for every $i=1,\ldots,N$, then Theorem \ref{nullproperty} guarantees that $|\varphi_i(D)|_1=0$ whatever $i$, and Remark \ref{degiorgietal} entails $\Delta_p u=\Div\Phi=0$ $\L^N$-a.e.\,in $D$.
\end{ex}
\begin{ex}
%\label{example3}
At the end of the next section we will apply the argument developed in Example \ref{example2} to the set $\D:=\D_g$. Thus, here we provide a typical example of $g:\R^N\to\R^+$ such that $|(\D_g)_i|_1=0$ for all $i$. Let
$$g(\xi):=\prod_{i=1}^N g_i(\xi_i)\quad\forall\,\xi=(\xi_1,\ldots,\xi_N)\in\R^N,$$
where each $g_i:\R\to[m_i,M_i]$, with $0<m_i<M_i<+\infty$, is a bounded variation function. One evidently has $\D_g\subseteq\prod_{i=1}^N\D_{g_i}$. Moreover, $\D_g$ turns out countable, because so is every $\D_{g_i}$ due to Jordan's decomposition theorem. Consequently, $|(\D_g)_i|_1=0$ for all $i=1,\ldots,N$.\\
Other examples may be obtained by factorization of $g$ into $g_i$'s, being each $g_i$ like in \cite[Examples 3.8-3.9]{GM}.
\end{ex}
\section{Proof of the main result}\label{proofmainres}
As usual \cite{CLM}, a function $\underline{u}\in W^{1,p}(\Omega)$ is called a sub-solution to \eqref{prob} when $\underline{u}\leq 0$ on $\partial\Omega$ and
\begin{equation*}
%\label{defsubsol}
\int_\Omega|\nabla\underline{u}|^{p-2}\nabla\underline{u}\nabla\phi\dx \leq
\int_\Omega \left[f(\underline{u})+g(\nabla\underline{u})\right]\phi\dx\quad\forall\, \phi\in W^{1,p}_0(\Omega)_+\, .
\end{equation*}
\begin{lemma}\label{subsollemma}
Let \eqref{hypatzero} be satisfied. Then problem \eqref{prob} admits a sub-solution $\underline{u}\in C^{1,\alpha}_0(\overline{\Omega})$.
\end{lemma}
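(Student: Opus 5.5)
Two cases according to which alternative of \eqref{hypatzero} holds. In each case the sub-solution is built from the first eigenfunction $\phi_1$ of $({\rm P}_7)$ or from a torsion-type function, scaled by a small parameter. Throughout, $f,g\ge 0$, and the sub-solution inequality only has to hold against nonnegative test functions.

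\emph{Case 1: $\liminf_{s\to0^+} f(s)/s^{p-1}>\lambda_1$.} Fix $\mu>\lambda_1$ and $\delta>0$ with $f(s)\ge \mu s^{p-1}$ for all $s\in(0,\delta)$. By \eqref{firsteigenf}, $\phi_1\in C^{1,\alpha}_0(\overline{\Omega})$ is bounded, so we may choose $t>0$ with $t\|\phi_1\|_\infty<\delta$, and set $\underline{u}:=t\phi_1$. Then $\underline{u}\in C^{1,\alpha}_0(\overline{\Omega})$, $\underline u>0$ in $\Omega$ and $\underline u=0$ on $\partial\Omega$. For every $\phi\in W^{1,p}_0(\Omega)_+$,
$$\int_\Omega|\nabla\underline{u}|^{p-2}\nabla\underline{u}\nabla\phi\dx=\lambda_1\int_\Omega \underline{u}^{p-1}\phi\dx\le \int_\Omega f(\underline{u})\phi\dx\le\int_\Omega[f(\underline u)+g(\nabla\underline u)]\phi\dx,$$
because $g\ge0$ and $f(\underline u)\ge\mu\underline u^{p-1}\ge\lambda_1\underline u^{p-1}$ pointwise in $\Omega$. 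Integrability of the right-hand side is no issue: $f$ is bounded on the range of $\underline u$ near zero by \ref{fsing}, and in any case the inequality only needs a lower bound.

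\emph{Case 2: $\liminf_{|\xi|\to0} g(\xi)>0$.} Pick $m>0$ and $\rho>0$ with $g(\xi)\ge m$ whenever $|\xi|<\rho$, $\xi\neq0$. Near $\xi=0$ we only need $g\ge m$ a.e., which follows from the $\liminf$. Let $e\in C^{1,\alpha}_0(\overline{\Omega})$ be the torsion function, i.e.\ the solution of $-\Delta_p e=1$ in $\Omega$, $e=0$ on $\partial\Omega$. It exists by Minty--Browder applied to $A_p$ (see $({\rm P}_6)$) and is $C^{1,\alpha}$ by Lieberman's regularity theory. Set $\underline u:=t e$ with $t>0$ chosen so that $t\|\nabla e\|_\infty<\rho$ and $t^{p-1}\le m$. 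Then $|\nabla\underline u|<\rho$ everywhere, so for $\phi\in W^{1,p}_0(\Omega)_+$,
$$\int_\Omega|\nabla\underline{u}|^{p-2}\nabla\underline{u}\nabla\phi\dx=t^{p-1}\int_\Omega\phi\dx\le m\int_\Omega\phi\dx\le\int_\Omega g(\nabla\underline u)\phi\dx,$$
using $f\ge0$.

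\emph{Main obstacle.} The delicate point is the definition of $g(\nabla\underline u)$ at points where $\nabla\underline u=0$, or where $\nabla\underline u$ lands in $\D_g$, since $g$ is only required to satisfy $g\ge m$ in a punctured neighbourhood of $0$. I would handle this as follows.
\begin{itemize}
\item The set $\{\nabla e=0\}$ has measure zero, because $-\Delta_p e=1\neq0$ there and the $p$-Laplacian is strongly local (cf.\ the discussion after Theorem \ref{mainthm}).
\item Alternatively, one can read the $\liminf$ over $|\xi|\to0$ as including $\xi=0$, so that $g(0)\ge m$ as well.
\end{itemize}
With either of these, the inequality above holds for a.e.\ $x\in\Omega$, which is all that the integral formulation of a sub-solution requires.
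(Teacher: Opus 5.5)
Your proof is correct and follows the paper's scheme. Case 1 is the paper's argument ($\underline u=k\varphi_1$ with $\|\underline u\|_\infty<\delta$). In Case 2 the paper keeps $\underline u=k\varphi_1$ and imposes two smallness conditions, $\|\underline u\|_\infty<(\theta/\lambda_1)^{1/(p-1)}$ and $\|\nabla\underline u\|_\infty<\delta$, so that $-\Delta_p\underline u=\lambda_1\underline u^{p-1}\le\theta\le g(\nabla\underline u)$.

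Your torsion function makes $-\Delta_p\underline u=t^{p-1}$ constant, which is marginally cleaner. It costs an existence and regularity step that $\varphi_1$ gets for free from $({\rm P}_7)$. You should also state that $e>0$ in $\Omega$ (weak comparison plus V\'azquez's strong maximum principle). This matters because $f$ is only defined on $\R^+$, and the paper later uses positivity of $\underline u$ (indeed $\underline u\ge l d$).

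Two side remarks. First, your integrability comment in Case 1 is wrong as stated. \ref{fsing} allows $f(s)\sim s^{-\gamma}$ as $s\to0^+$, so $f$ need not be bounded on the range of $\underline u$, and the lemma assumes only \eqref{hypatzero} anyway. This is harmless, because the right-hand side is the integral of a nonnegative function and only a lower bound is used.

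Second, your concern about $\xi=0$ is legitimate, and the paper in fact glosses over it by asserting $g\ge\theta$ on the whole ball $B_{2\delta}(0)$. However, strong locality in the paper's sense concerns the level sets $e^{-1}(c)$, not the critical set $\{\nabla e=0\}$, so it is not the right tool. What works is this:
\begin{itemize}
\item $V:=|\nabla e|^{p-2}\nabla e\in W^{1,2}_{\loc}(\Omega;\R^N)$ by the Cianchi--Maz'ya theorem, the same result invoked in the paper's final proof.
\item Hence $\Div V=0$ a.e.\ on $\{V=0\}=\{\nabla e=0\}$, by Stampacchia's theorem applied componentwise. Equivalently, apply Remark \ref{degiorgietal} with $D=\{\nabla e=0\}$, whose image under $V$ is $\{0\}$.
\item Since $-\Div V=1$ a.e., this set is null.
\end{itemize}
The same argument, using $\lambda_1\varphi_1^{p-1}>0$ in $\Omega$, closes the corresponding small gap in the paper's version.
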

\begin{proof}
According to \eqref{hypatzero}, there exists $\delta\in(0,1)$ such that either
\begin{equation}\label{fatzero}
\frac{f(s)}{s^{p-1}}\geq\lambda_1\quad\forall\, s\in(0,2\delta]
\end{equation}
or, with appropriate $\theta>0$,
\begin{equation}\label{gatzero}
g(\xi)\geq\theta\quad\forall\,\xi\in B_{2\delta}(0).
\end{equation}
If \eqref{fatzero} holds, set $\underline{u}:=k\phi_1$, where $\varphi_1$ fulfills \eqref{firsteigenf} while $k>0$ is so small that
\begin{equation}\label{firstuseful}
\Vert\underline{u}\Vert_\infty<\delta.    
\end{equation}
Then
\begin{equation*}
%\label{subsolf}
-\Delta_p \underline{u} = \lambda_1 \underline{u}^{p-1} \leq f(\underline{u}).
\end{equation*}
When \eqref{gatzero} is true, put $\underline{u}:=k\varphi_1$, with $k>0$ small enough to have
\begin{equation}\label{useful}
\|\underline{u}\|_\infty<\left(\frac{\theta}{\lambda_1}\right)^{\frac{1}{p-1}}\quad \mbox{and}\quad\Vert\nabla\underline{u}\Vert_\infty<\delta.
\end{equation}
Consequently,
\begin{equation*}
%\label{subsolg}
-\Delta_p \underline{u} = \lambda_1 \underline{u}^{p-1} \leq \theta \leq g(\nabla\underline{u}).
\end{equation*}
Since $f,g$ are nonnegative, in both cases we infer
$$-\Delta_p\underline{u}\leq f(\underline{u})+g(\nabla\underline{u}),$$
which completes the proof.
\end{proof}
Exploiting the function $\underline{u}$ given by Lemma \ref{subsollemma}, we truncate $f$ as follows:
$$\tilde{f}(x,s):=f(x,\max\{\underline{u}(x),s\}),\quad (x,s)\in\Omega\times\R.$$
Given two approximations of the identity \cite{WZ}, say  $\{\rho_\eps\}_{\eps>0}$ and 
$\{\eta_\eps\}_{\eps>0}$, in $\R$ and in $\R^N$, respectively, let us next regularize both $\tilde{f}$ and $g$ by setting
\begin{equation*}
\tilde{f}_\eps(x,s):=(\tilde{f}(x,\cdot)\ast\rho_\eps)(s)\quad\forall\, (x,s)\in\Omega\times\R,\qquad
g_\eps(\xi):=(g\ast\eta_\eps)(\xi)\quad\forall\, \xi\in\R^N,
\end{equation*}
where $\ast$ denotes the convolution product. Obviously, $\tilde{f}_\eps$ satisfies Carathéodory's conditions while $g_\eps$ is continuous. Put, provided $u\in W^{1,p}_0 (\Omega)$,
\begin{equation}\label{defFG}
F_\eps(u):=\tilde{f}_\eps(\cdot,u)\;\;\mbox{and}\;\; G_\eps(u):=g_\eps(\nabla T(u)),\;\;
\mbox{with}\;\; T(u):=\max\{u,\underline{u}\}.
\end{equation}
\begin{rmk}\label{ordertruncreg}
It should be noted that we truncate and regularize both $f$ and $g$, but in a precise way: $f$ is first truncated and then regularized, whereas the opposite is done for $g$. Interchanging the order seems to be impossible; in fact, the truncation $\tilde{f}$ of $f$ furnishes a good control on the singular term, which remains stable under regularization, while the truncation of $g_\eps$ (the regularization of $g$) must be done via the operator $T$, since it depends on the gradient of $u$, not on $u$ itself. Finally, we highlight that this approach ensures a weak comparison with the sub-solution (see Lemma \ref{comparisonlemma} below).
\end{rmk}
\begin{lemma}\label{fepsest}
Under \ref{fbound}--\ref{fsing}, \eqref{hypatzero}, and \eqref{hypatinfty}, for every $\eps\in(0,1)$ and $\sigma>0$ there exists $C_\sigma>0$ such that
$$\tilde{f}_\eps(x,s)\leq (L_f+\sigma)|s|^{p-1}+C_\sigma d(x)^{-\gamma}\quad\forall\, (x,s)\in\Omega\times\R, $$
where $\displaystyle{L_f:=\limsup_{s\to+\infty}\frac{f(s)}{s^{p-1}}}$.
\end{lemma}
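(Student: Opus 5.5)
We first bound $f$ itself and then transport the bound through the truncation and the convolution. By the definition of $L_f$, for $\sigma>0$ there is $s_\sigma>1$ with $f(s)\le (L_f+\sigma/2)s^{p-1}$ for $s\ge s_\sigma$ (if $L_f$ is finite; by \eqref{hypatinfty} it is). By \ref{fsing} there are $s_0\in(0,1)$ and $M>0$ with $f(s)\le Ms^{-\gamma}$ on $(0,s_0]$. By \ref{fbound}, $f\le M'$ a.e.\ on $[s_0,s_\sigma+1]$, hence $f(s)\le M' s_0^{-\gamma}s^{-\gamma}\cdot(s_\sigma+1)^{\gamma}$ there. Altogether
$$f(s)\le (L_f+\sigma/2)s^{p-1}+C'_\sigma s^{-\gamma}\quad\mbox{for a.a. } s>0 .$$
Only a.e.\ bounds are needed, since we integrate against $\rho_\eps$.

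Next, the truncation. Since $\tilde f(x,s)=f(\max\{\underline u(x),s\})$ and $\underline u=k\phi_1\ge l d$ in $\Omega$ by Proposition \ref{distcomparison}$({\rm a}_2)$, the singular part obeys $\max\{\underline u(x),s\}^{-\gamma}\le \underline u(x)^{-\gamma}\le l^{-\gamma}d(x)^{-\gamma}$. For the growth part, $\max\{\underline u(x),s\}^{p-1}\le |s|^{p-1}+\|\underline u\|_\infty^{p-1}$, and the constant is absorbed into $C\,d^{-\gamma}$ because $d$ is bounded on $\Omega$ ($d\le\diam\Omega$). Hence, for a.a.\ $t\in\R$,
$$\tilde f(x,t)\le (L_f+\sigma/2)|t|^{p-1}+C''_\sigma d(x)^{-\gamma}.$$
Here $x$ is a parameter and the bound holds for a.e.\ $t$, uniformly in $x$; note that $t$ ranges over all of $\R$, including negative values, where the truncation is essential.

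Finally, the convolution. Take $\rho_\eps$ supported in $[-\eps,\eps]$ with unit mass (standard mollifiers). Then
$$\tilde f_\eps(x,s)\le (L_f+\sigma/2)\sup_{|t-s|\le\eps}|t|^{p-1}+C''_\sigma d(x)^{-\gamma}\le (L_f+\sigma/2)(|s|+1)^{p-1}+C''_\sigma d(x)^{-\gamma},$$
using $\eps<1$. It remains to check that $(L_f+\sigma/2)(|s|+1)^{p-1}\le (L_f+\sigma)|s|^{p-1}+C$ for all $s$. This holds because $(|s|+1)^{p-1}/|s|^{p-1}\to1$ as $|s|\to\infty$, so the inequality is true for $|s|\ge R_\sigma$; for $|s|<R_\sigma$ the left side is bounded. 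The resulting constant is again absorbed into $C_\sigma d^{-\gamma}$ via the boundedness of $d$. This absorption, together with making the $\eps$-shift uniform in $\eps\in(0,1)$, is the only delicate point. The main care needed is that all constants be independent of $\eps$ (which the $\sup$ over $|t-s|\le 1$ ensures) and of $x$.
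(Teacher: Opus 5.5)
Your proof is correct and follows essentially the paper's route: an upper bound $f(s)\le c_\sigma s^{-\gamma}+(L_f+\sigma/2)s^{p-1}$ on $\R^+$, then transport through the truncation (absorbing the constant $\|\underline u\|_\infty^{p-1}$ into the singular term), then the convolution estimate by $(|s|+1)^{p-1}$ using $\eps<1$, and finally $\underline u\ge l d$ together with $d\le\diam(\Omega)$ to collect everything into $C_\sigma d^{-\gamma}$; the only difference is that you pass from $\underline u^{-\gamma}$ to $d^{-\gamma}$ before convolving rather than after. One correction to your side remark: a.e.\ bounds on $f$ do not suffice, because $\tilde f(x,\cdot)\equiv f(\underline u(x))$ on the whole half-line $(-\infty,\underline u(x)]$, so the estimate is needed at the single point $\underline u(x)$. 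Hence \ref{fbound} must be read as genuine local boundedness, as the paper implicitly does when it states the bound on all of $\R^+$; under that reading your bound on $f$ holds for every $s>0$ and the rest of your argument goes through unchanged.
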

\begin{proof}
Due to \eqref{hypatinfty} one has $L_f<\infty$. Fix $\eps\in (0,1)$ and $\sigma>0$. Assumptions \ref{fbound}--\ref{fsing} yield $c_\sigma >0$ fulfilling
$$f(s)\leq c_\sigma s^{-\gamma}+\left(L_f+\frac{\sigma}{2}\right)s^{p-1}\quad\mbox{in} \;\;\R^+. $$
By the definition of $\tilde{f}$ we thus get, enlarging $c_\sigma$,
$$\tilde{f}(x,s)\leq c_\sigma\underline{u}(x)^{-\gamma}+
\left(L_f+\frac{\sigma}{2}\right)|s|^{p-1},\quad (x,s)\in\Omega\times\R.$$
Therefore, 
\begin{equation*}
\begin{aligned}
\tilde{f}_\eps (x,s) & = \int_{B_\eps(s)}\tilde{f}(x,t)\rho_\eps(s-t)\dt
\leq\|\rho_\eps\|_1 \esssup_{|s-t|<\eps}\tilde{f}(x,t) \\
& \leq c_\sigma \underline{u}(x)^{-\gamma}
+\left(L_f+\frac{\sigma}{2}\right)(|s|+\eps)^{p-1}\leq c_\sigma \underline{u}(x)^{-\gamma} +\left(L_f+\frac{\sigma}{2}\right)(|s|+1)^{p-1},
\end{aligned}
\end{equation*}
because $\supp\rho_\eps \subseteq B_\eps(0)$, $\|\rho_\eps\|_1=1$, and $\eps\in(0,1)$. A simple computation shows that 
$$\left(L_f+\frac{\sigma}{2}\right)(|s|+1)^{p-1}\leq (L_f+\sigma)|s|^{p-1}+c'_\sigma\quad 
\forall\, s\in\R,$$
with appropriate $c_\sigma'>0$. Thus,
\begin{equation}\label{tildefeps}
\tilde{f}_\eps (x,s)\leq c_\sigma \underline{u}(x)^{-\gamma}+(L_f+\sigma)|s|^{p-1}+c'_\sigma\quad\mbox{in}\;\;\Omega\times\R.
\end{equation}
Since $\underline{u}\in C^{1,\alpha}_0(\overline{\Omega})$, from Proposition \ref{distcomparison} it follows $\underline{u}\geq ld$ for some $l>0$. Now, the conclusion is achieved by simply choosing $C_\sigma:= c_\sigma l^{-\gamma}+c'_\sigma \diam(\Omega)^\gamma$.
\end{proof}
\begin{lemma}\label{gepsest}
Let \ref{gbound}, \eqref{hypatzero}, and \eqref{hypatinfty} be satisfied. Then, for every $\eps\in(0,1)$ and $\sigma>0$ there exists $C'_\sigma>0$ such that
$$ g_\eps(\xi)\leq (L_g+\sigma)|\xi|^{p-1}+C'_\sigma\quad\forall\,\xi\in\R^N, $$
where $\displaystyle{L_g:=\limsup_{|\xi|\to+\infty}\frac{g(\xi)}{|\xi|^{p-1}}}$.
\end{lemma}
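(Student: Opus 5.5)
We mirror the proof of Lemma \ref{fepsest}, with the role of the singular term now played by local boundedness of $g$. Fix $\eps\in(0,1)$ and $\sigma>0$. First, \eqref{hypatinfty} forces $L_g<\infty$, since $\lambda_1^{\frac{p-1}{p}}L_g<\lambda_1$ and $f\geq 0$. By the definition of $\limsup$, there exists $R_\sigma>0$ such that
$$g(\xi)\leq\left(L_g+\frac{\sigma}{2}\right)|\xi|^{p-1}\quad\mbox{for all}\;\; |\xi|>R_\sigma .$$
On the ball $\overline{B_{R_\sigma}(0)}$, hypothesis \ref{gbound} gives $g\leq M_\sigma:=\|g\|_{L^\infty(B_{R_\sigma+1}(0))}$ almost everywhere. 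Combining the two estimates, we obtain
$$g(\xi)\leq\left(L_g+\frac{\sigma}{2}\right)|\xi|^{p-1}+M_\sigma\quad\mbox{for a.a.}\;\;\xi\in\R^N .$$
Only an a.e.\ bound is needed, because convolution ignores null sets.

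Next we pass to $g_\eps$. Since $\supp\eta_\eps\subseteq B_\eps(0)$, $\eta_\eps\geq0$ and $\|\eta_\eps\|_1=1$, we have
$$g_\eps(\xi)=\int_{B_\eps(\xi)}g(\zeta)\,\eta_\eps(\xi-\zeta)\,{\rm d}\zeta\leq\esssup_{|\zeta-\xi|<\eps}g(\zeta)\leq\left(L_g+\frac{\sigma}{2}\right)(|\xi|+1)^{p-1}+M_\sigma ,$$
where in the last step we used $\eps<1$.

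The final step is the same elementary inequality used in Lemma \ref{fepsest}: there is $c'_\sigma>0$ such that
$$\left(L_g+\frac{\sigma}{2}\right)(|\xi|+1)^{p-1}\leq(L_g+\sigma)|\xi|^{p-1}+c'_\sigma .$$
To see this, note that $(t+1)^{p-1}/t^{p-1}\to1$ as $t\to\infty$. So for $t$ large the left side is at most $(L_g+\sigma)t^{p-1}$, and on the remaining bounded range of $t$ it is bounded by a constant. If $L_g=0$, the same argument works, since $\sigma/2<\sigma$ still leaves room. We then set $C'_\sigma:=c'_\sigma+M_\sigma$, which is independent of $\eps$.

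No real obstacle is expected. The only points needing care are the finiteness of $L_g$ and working with essential suprema, so that the pointwise values of $g$ on null sets are irrelevant.
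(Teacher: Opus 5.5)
Your proposal is correct and follows the paper's proof essentially step by step. Both arguments get finiteness of $L_g$ from \eqref{hypatinfty}, derive a global bound $g\leq c_\sigma+(L_g+\sigma/2)|\xi|^{p-1}$ from \ref{gbound} and the definition of $L_g$, estimate the convolution using $\supp\eta_\eps\subseteq B_\eps(0)$ and $\eps<1$, and absorb $(|\xi|+1)^{p-1}$ through the same elementary inequality; your explicit treatment of the bound as an almost-everywhere one via essential suprema just makes precise what the paper does implicitly.
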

\begin{proof}
Thanks to \eqref{hypatinfty} one has $L_g<\infty$. Fix $\sigma>0$. Assumption \ref{gbound} and the choice of $L_g$ provide  $c_\sigma>0$ fulfilling
$$g(\xi)\leq c_\sigma+\left(L_g+\frac{\sigma}{2}\right)|\xi|^{p-1}\quad\mbox{in}
\;\;\R^N. $$
Since $\supp\eta_\eps\subseteq B_\eps(0)$, $\|\eta_\eps\|_1=1$, and $\eps\in(0,1)$, we next have
\begin{equation}\label{geps}
\begin{aligned}
g_\eps(\xi)& =\int_{B_\eps(\xi)} g(\zeta)\eta_\eps(\xi-\zeta) \dzeta \leq \|\eta_\eps\|_1 \sup_{\zeta\in B_\eps(\xi)} \left[c_\sigma
+\left(L_g+\frac{\sigma}{2}\right)|\zeta|^{p-1}\right] \\
& \leq c_\sigma + \left(L_g+\frac{\sigma}{2}\right)(|\xi|+1)^{p-1}\leq c_\sigma + (L_g+\sigma)|\xi|^{p-1} + c'_\sigma
\end{aligned}
\end{equation}
for all $\xi\in\R^N$, where the inequality
$$\left(L_g+\frac{\sigma}{2}\right)(|\xi|+1)^{p-1}\leq (L_g+\sigma)|\xi|^{p-1}
+ c'_\sigma,\quad\xi\in\R^N,$$
was used (cf. the previous proof). Setting $C'_\sigma := c_\sigma+c'_\sigma$ leads to the conclusion.
\end{proof}
\begin{thm}\label{exauxprob}
Suppose \ref{fbound}--\ref{fsing}, \ref{gbound}, \eqref{hypatzero}, and \eqref{hypatinfty} hold true. Then, for every $\eps\in(0,1)$, the equation
\begin{equation}\label{auxprob}
-\Delta_p u-F_\eps(u)-G_\eps(u)=0\quad\mbox{in} \;\; W^{-1,p'}(\Omega),
\end{equation}
where $F_\eps$ and $G_\eps$ are defined in \eqref{defFG}, admits a solution $u_\eps\in W^{1,p}_0(\Omega)$.
\end{thm}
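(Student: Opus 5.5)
We plan to apply the main theorem on pseudo-monotone operators (Brezis' surjectivity result): a bounded, pseudo-monotone, coercive operator $B:X\to X^*$ on a reflexive Banach space is onto. We take $X:=W^{1,p}_0(\Omega)$ and
$$B(u):=A_p(u)-F_\eps(u)-G_\eps(u),$$
and look for $u_\eps$ with $B(u_\eps)=0$.

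\emph{Step 1: $B$ is well defined and bounded.} By Lemma \ref{fepsest},
$$|F_\eps(u)|\le (L_f+\sigma)|u|^{p-1}+C_\sigma d^{-\gamma}.$$
Here $\tilde f_\eps\ge0$, since $f\ge0$. Proposition \ref{hardysobolev} shows that $d^{-\gamma}$ defines an element of $W^{-1,p'}(\Omega)$, while $|u|^{p-1}\in L^{p'}(\Omega)$. Hence $F_\eps$ maps bounded sets of $W^{1,p}_0(\Omega)$ into bounded sets of $W^{-1,p'}(\Omega)$.

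For $G_\eps$, note that $T(u)\in W^{1,p}(\Omega)$ with $\|\nabla T(u)\|_p\le\|u\|_{1,p}+\|\underline u\|_{1,p}$. Lemma \ref{gepsest} then gives
$$\|G_\eps(u)\|_{p'}\le (L_g+\sigma)\|\nabla T(u)\|_p^{p-1}+C,$$
so by $({\rm P}_4)$ $G_\eps$ is bounded into $W^{-1,p'}(\Omega)$.

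\emph{Step 2: pseudo-monotonicity.} This is the main obstacle, because $G_\eps$ depends on the gradient and is therefore not compact. The plan is to use $({\rm S})_+$ of $A_p$ directly rather than $({\rm P}_3)$.

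Let $u_n\rightharpoonup u$ with $\limsup\langle B(u_n),u_n-u\rangle\le0$. Passing to a subsequence, $u_n\to u$ in $L^p(\Omega)$ and a.e.

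For the $F_\eps$ term, the Carathéodory property, the growth bound, and Proposition \ref{hardysobolev} applied to $|u_n-u|$ give $\langle F_\eps(u_n),u_n-u\rangle\to0$. Concretely, split $\int d^{-\gamma}|u_n-u|$ using Proposition \ref{distsumm} with $q$ close to $1$ and Hölder with $u_n\to u$ in $L^{q'}$; when $q'$ stays below $p^*$ this is compact.

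For the $G_\eps$ term, $G_\eps(u_n)$ is bounded in $L^{p'}$, so
$$|\langle G_\eps(u_n),u_n-u\rangle|\le\|G_\eps(u_n)\|_{p'}\|u_n-u\|_p\to0.$$

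Consequently $\limsup\langle A_p(u_n),u_n-u\rangle\le0$, and $({\rm P}_6)$ yields $u_n\to u$ strongly in $W^{1,p}_0(\Omega)$. Then $\nabla T(u_n)\to\nabla T(u)$ in $L^p$, since $T$ is continuous on $W^{1,p}$. By continuity of $g_\eps$, the growth bound and Vitali (or a dominated convergence subsequence argument), $G_\eps(u_n)\to G_\eps(u)$ in $L^{p'}$. Likewise $F_\eps(u_n)\to F_\eps(u)$ in $W^{-1,p'}$. Therefore $B(u_n)\to B(u)$, which gives the pseudo-monotonicity inequality for every $z$. The subsequence issue is handled in the usual way, by arguing on an arbitrary subsequence of the $\liminf$.

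\emph{Step 3: coercivity.} Test $B(u)$ with $u$ itself. Using $\lambda_1\|u\|_p^p\le\|u\|_{1,p}^p$ and Hölder,
$$\langle B(u),u\rangle\ge \|u\|_{1,p}^p-\frac{L_f+\sigma}{\lambda_1}\|u\|_{1,p}^p-C_\sigma K\|u\|_{1,p}-(L_g+\sigma)\|\nabla T(u)\|_p^{p-1}\|u\|_p-C\|u\|_p .$$
Since $\|\nabla T(u)\|_p\le\|u\|_{1,p}+c$, we have
$$\|\nabla T(u)\|_p^{p-1}\|u\|_p\le \lambda_1^{-1/p}\|u\|_{1,p}^p+o(\|u\|_{1,p}^p).$$
Thus the leading coefficient is
$$1-\frac{L_f}{\lambda_1}-\frac{L_g}{\lambda_1^{1/p}}-O(\sigma),$$
which is positive for small $\sigma$ exactly by \eqref{hypatinfty}, after dividing it by $\lambda_1$. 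Hence $\langle B(u),u\rangle/\|u\|_{1,p}\to+\infty$. Surjectivity of $B$ then provides $u_\eps$ with $B(u_\eps)=0$, i.e.\ a solution of \eqref{auxprob}.
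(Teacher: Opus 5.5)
Your architecture is sound: surjectivity of a bounded, pseudo-monotone, coercive operator, with coercivity obtained from Lemmas \ref{fepsest}--\ref{gepsest} and \eqref{hypatinfty} exactly as in \eqref{coercive}. Verifying pseudo-monotonicity directly through the $({\rm S})_+$ property of $A_p$ is a legitimate alternative to the paper's splitting $F_\eps=H_\eps+K_\eps$ combined with $({\rm P}_1)$--$({\rm P}_3)$. Incidentally, $G_\eps$ \emph{is} compact in the paper's sense: it sends bounded sets into relatively compact subsets of $W^{-1,p'}(\Omega)$ via $({\rm P}_4)$. Your estimate $\|G_\eps(u_n)\|_{p'}\|u_n-u\|_p\to0$ is exactly the mechanism behind $({\rm P}_3)$.

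However, the step $\int_\Omega d^{-\gamma}|u_n-u|\dx\to0$ is not established. Proposition \ref{hardysobolev} applied to $|u_n-u|$ only gives a bound. Your H\"older argument fails in general. Indeed, $d^{-\gamma}\in L^q(\Omega)$ forces $q<1/\gamma$, hence $q'>1/(1-\gamma)$; taking $q$ close to $1$ goes the wrong way, pushing $q'\to\infty$. Compactness of $W^{1,p}_0(\Omega)\hookrightarrow L^{q'}(\Omega)$ needs $q'<p^*$. The two requirements are compatible only if $\gamma<1-1/p^*$, which is not assumed. For $N=3$, $p=6/5$ one has $p^*=2$, so every $\gamma\in[1/2,1)$ is out of reach. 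For $\gamma\geq 1-1/p^*$, H\"older plus Sobolev cannot even show that $d^{-\gamma}$ defines an element of $W^{-1,p'}(\Omega)$. This is precisely why the Hardy-type Proposition \ref{hardysobolev} is needed.

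The repair is the paper's. By $({\rm P}_5)$, $|u_n-u|\rightharpoonup0$ in $W^{1,p}_0(\Omega)$. By Proposition \ref{hardysobolev}, $w\mapsto\int_\Omega d^{-\gamma}w\dx$ is a continuous linear functional on $W^{1,p}_0(\Omega)$. Hence the integral tends to $0$ along the whole sequence. Combined with $\int_\Omega|u_n|^{p-1}|u_n-u|\dx\to0$, this gives $\langle F_\eps(u_n),u_n-u\rangle\to0$.

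A similar caveat applies to your claim ``likewise $F_\eps(u_n)\to F_\eps(u)$ in $W^{-1,p'}(\Omega)$.'' The $L^{p'}$/Vitali argument you used for $G_\eps$ does not transfer, since $d^{-\gamma}\notin L^{p'}(\Omega)$ in general. You do not need strong convergence, though. Once $u_n\to u$ in $W^{1,p}_0(\Omega)$, it suffices that $B(u_n)\rightharpoonup B(u)$; then $\langle B(u_n),u_n-z\rangle\to\langle B(u),u-z\rangle$ because $\{B(u_n)\}$ is bounded. Weak convergence of $F_\eps(u_n)$ follows, along subsequences, from dominated convergence tested on a fixed $\phi\in W^{1,p}_0(\Omega)$, with $d^{-\gamma}|\phi|\in L^1(\Omega)$ by Proposition \ref{hardysobolev}, as in \eqref{Kepslim}. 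With these repairs your proof goes through.
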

\begin{proof}
Henceforth, $({\rm P}_i)$ stands for `property $({\rm P}_i)$ of Section \ref{prel}'. A simple argument based on Lemma \ref{gepsest} and $({\rm P}_4)$ ensures that the map $G_\eps:W^{1,p}_0(\Omega) \to W^{-1,p'}(\Omega)$ is continuous and compact. Concerning $F_\eps$, write
\begin{equation*}
b_\eps(u):=\frac{F_\eps(u)}{|u|^{p-1}+d^{-\gamma}}\, .   
\end{equation*}
Thanks to Lemma \ref{fepsest} one has 
\begin{equation}\label{bepsest}
%b_\eps(u)\in L^\infty(\Omega)\quad\mbox{and}\quad
\Vert b_\eps(u)\Vert_\infty\leq C^*_\sigma\quad\forall\, u\in W^{1,p}_0(\Omega),  
\end{equation}
with appropriate $C^*_\sigma>0$. Moreover,
\begin{equation*}
F_\eps(u)=H_\eps(u)+K_\eps(u),\quad\mbox{where}\quad H_\eps(u):=b_\eps(u)|u|^{p-1},\;\;
K_\eps(u):=b_\eps(u)d^{-\gamma}.
\end{equation*}
Through \eqref{bepsest} and $({\rm P}_4)$ we easily see that $H_\eps:W^{1,p}_0(\Omega)\to W^{-1,p'}(\Omega)$ is a compact continuous operator. By Proposition \ref{hardysobolev}, $K_\eps$ maps $W^{1,p}_0(\Omega)$ into $W^{-1,p'}(\Omega)$. Let us next show that $\pm K_\eps$ turn out pseudo-monotone. If $u_n\rightharpoonup u$ in $W^{1,p}_0(\Omega)$ then, due to $({\rm P}_5)$, $|u_n-u|\rightharpoonup 0$. Since, by \eqref{bepsest} again,
$$\left|\langle K_\eps(u_n),u_n-u\rangle\right|\leq 
C^*_\sigma\int_\Omega d^{-\gamma}|u_n-u|\,{\rm d}x\quad\forall\, n\in\N,$$
from Proposition \ref{hardysobolev} it follows
\begin{equation}\label{Kepsest}
\lim_{n\to+\infty}\langle K_\eps(u_n),u_n-u\rangle=0.   
\end{equation}
On the other hand, along a subsequence when necessary, 
$$u_n\rightharpoonup u\;\;\mbox{in}\;\; W^{1,p}_0(\Omega)\implies u_n\to u\:\: \mbox{a.e.\,in}\;\;\Omega\implies b_\eps(u_n)\to b_\eps(u)\;\;\mbox{a.e.\,in}\;\;\Omega$$
while, for any $v\in W^{1,p}_0(\Omega)$, inequality \eqref{bepsest} entails 
$$|b_\eps(u_n)d^{-\gamma}v|\leq C^*_\sigma d^{-\gamma}|v|,\quad n\in\N.$$
Because of Proposition \ref{hardysobolev}, the dominated convergence theorem applies, and one arrives at
$$\lim_{n\to+\infty}\int_\Omega b_\eps(u_n) d^{-\gamma}v\,{\rm d}x=
\int_\Omega b_\eps(u) d^{-\gamma}v\,{\rm d}x\, .$$
As $v\in W^{1,p}_0(\Omega)$ was arbitrary,
\begin{equation}\label{Kepslim}
\lim_{n\to+\infty} K_\eps(u_n)=K_\eps(u)\;\;\mbox{weakly in}\;\; W^{-1,p'}(\Omega),    
\end{equation}
Now, gathering \eqref{Kepsest}--\eqref{Kepslim} together yields
$$\langle K_\eps(u_n),u_n-v\rangle=
\langle K_\eps(u_n),u_n-u\rangle+\langle K_\eps(u_n),u-v\rangle\to 0$$
as $n\to+\infty$, whence both $K_\eps$ and $-K_\eps$ are pseudo-monotone. 
Via $({\rm P}_6)$, the compactness of $G_\eps$ and $H_\eps$, besides $({\rm P}_2)$--$({\rm P}_3)$, we can verify that $A_p-(H_\eps+G_\eps)$ is pseudo-monotone. So, the map
$$\Psi_\eps(u):=-\Delta_p u-(F_\eps+G_\eps)(u)
=A_p(u)-(H_\eps+G_\eps)(u)-K_\eps(u),\quad u\in W^{1,p}_0(\Omega),$$
also turns out to be pseudo-monotone; see $({\rm P}_1)$. By Theorem 2.99 in \cite{CLM}, the conclusion is achieved once we show that $\Psi_\eps$ is coercive. Exploiting Lemmas \ref{fepsest}--\ref{gepsest}, Proposition \ref{hardysobolev}, H\"older's inequality, and Poincaré's inequality, provides
\begin{equation*}
\begin{aligned}
\langle & \Psi_\eps(u),u \rangle \\
& \geq\|\nabla u\|_p^p-\int_\Omega \left[(L_f+\sigma)|u|^{p-1}
+C_\sigma d^{-\gamma}\right]|u|\dx
-\int_\Omega\left[(L_g+\sigma)|\nabla T(u)|^{p-1}+C'_\sigma\right]|u|\dx \\
& \geq\|\nabla u\|_p^p-(L_f+\sigma)\|u\|_p^p-C_\sigma''\|\nabla u\|_p
-(L_g+\sigma)\left(\|\nabla u\|_p^{p-1}+\|\nabla\underline{u}\|_p^{p-1}\right)\|u\|_p-C'_\sigma\|u\|_1 \\
& \geq\left(1-\frac{L_f+\sigma}{\lambda_1}-\frac{L_g+\sigma}{\lambda_1^{1/p}}\right) \|\nabla u\|_p^p - \tilde{C}_\sigma\|\nabla u\|_p\, ,
\end{aligned}
\end{equation*}
namely
\begin{equation}\label{coercive}
\langle\Psi_\eps(u),u\rangle\geq\left(1-\frac{L_f+\sigma}{\lambda_1}
-\frac{L_g+\sigma}{\lambda_1^{1/p}}\right)\|\nabla u\|_p^p-\tilde{C}_\sigma\|\nabla u\|_p 
\quad\forall\, u\in W^{1,p}_0(\Omega),
\end{equation}
where $\tilde{C}_\sigma$ denotes a suitable positive constant (that also depends on $\|\nabla\underline{u}\|_p$). Due to \eqref{hypatinfty}, we can choose $\sigma>0$ fulfilling
\begin{equation}\label{sigmacond}
1-\frac{L_f+\sigma}{\lambda_1}-\frac{L_g+\sigma}{\lambda_1^{1/p}}>0.
\end{equation}
Accordingly, $\frac{\langle\Psi_\eps(u),u\rangle}{\|\nabla u\|_p}\to+\infty$ as $\|\nabla u\|_p\to\infty$, which completes the proof.
\end{proof}
\begin{lemma}\label{comparisonlemma}
Let \ref{fbound}--\ref{fsing}, \ref{gbound}, \eqref{hypatzero}, and \eqref{hypatinfty} be satisfied. If $0<\eps<\delta$, where $\delta$ complies with \eqref{fatzero}--\eqref{gatzero}, then for every solution $u_\eps\in W^{1,p}_0(\Omega)$  to \eqref{auxprob} one has $u_\eps\geq \underline{u}$. 
\end{lemma}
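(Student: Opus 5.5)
We test \eqref{auxprob} and the sub-solution inequality of Lemma \ref{subsollemma} with $\phi:=(\underline{u}-u_\eps)_+\in W^{1,p}_0(\Omega)_+$, which is admissible because $\underline{u}\in C^{1,\alpha}_0(\overline{\Omega})$ and $u_\eps\in W^{1,p}_0(\Omega)$. Subtracting the two relations gives
$$\int_{\{\underline{u}>u_\eps\}}\left(|\nabla\underline{u}|^{p-2}\nabla\underline{u}-|\nabla u_\eps|^{p-2}\nabla u_\eps\right)\cdot\nabla(\underline{u}-u_\eps)\dx\le\int_\Omega\Big[\big(-\Delta_p\underline{u}\big)-F_\eps(u_\eps)-G_\eps(u_\eps)\Big]\phi\dx .$$
We want the right-hand side to be $\le 0$. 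By strict monotonicity of $\xi\mapsto|\xi|^{p-2}\xi$, this forces $\nabla(\underline{u}-u_\eps)_+=0$, hence $(\underline{u}-u_\eps)_+=0$, i.e. $u_\eps\geq\underline{u}$.

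The core step is a pointwise inequality on $\{\underline{u}>u_\eps\}$. There we use the explicit equation $-\Delta_p\underline{u}=\lambda_1\underline{u}^{p-1}$ (with $\underline{u}=k\phi_1$), rather than merely $-\Delta_p\underline{u}\le f(\underline{u})+g(\nabla\underline{u})$. The reason is that the regularized terms need not dominate $f(\underline{u})$ and $g(\nabla\underline{u})$ pointwise; they only dominate essential lower bounds. On this set, $T(u_\eps)=\underline{u}$, so by locality of weak gradients $\nabla T(u_\eps)=\nabla\underline{u}$ a.e. there, and thus $G_\eps(u_\eps)=g_\eps(\nabla\underline{u})$. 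Likewise $\tilde f(x,t)=f(\max\{\underline{u}(x),t\})$.

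We split into the two cases of Lemma \ref{subsollemma}.

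\emph{Case \eqref{gatzero}.} Here $\|\nabla\underline{u}\|_\infty<\delta$ and $\eps<\delta$, so $B_\eps(\nabla\underline{u}(x))\subseteq B_{2\delta}(0)$. Since $g\ge\theta$ there and $\|\eta_\eps\|_1=1$, we get $g_\eps(\nabla\underline{u}(x))\ge\theta\ge\lambda_1\underline{u}^{p-1}$ by \eqref{useful}. Also $F_\eps\ge0$.

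\emph{Case \eqref{fatzero}.} For $|t-u_\eps(x)|<\eps$ we have $\max\{\underline{u}(x),t\}\in[\underline{u}(x),\underline{u}(x)+\eps)$ whenever $u_\eps(x)<\underline{u}(x)$. This interval lies in $(0,2\delta]$ by \eqref{firstuseful}. Hence
$$\tilde f(x,t)\ge\lambda_1\max\{\underline{u}(x),t\}^{p-1}\ge\lambda_1\underline{u}(x)^{p-1},$$
so $\tilde f_\eps(x,u_\eps(x))\ge\lambda_1\underline{u}(x)^{p-1}$. Also $G_\eps\ge0$.

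In both cases the integrand on the right is $\le0$ on $\operatorname{supp}\phi$, which closes the argument.

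The main obstacle is exactly this pointwise comparison after regularization. One must check that the mollification radius $\eps<\delta$ keeps the arguments inside the region where \eqref{fatzero} or \eqref{gatzero} applies. Note that \eqref{fatzero} must be used for values up to $\underline{u}+\eps$, not just at $\underline{u}$; this is why it was stated on $(0,2\delta]$ and why $\max\{\underline{u},\cdot\}$ keeps values bounded away from the singularity. A minor technical point is the integrability of $F_\eps(u_\eps)\phi$, which is guaranteed by Lemma \ref{fepsest} and Proposition \ref{hardysobolev}.
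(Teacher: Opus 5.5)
Your proof is correct and is essentially the paper's argument. You test \eqref{auxprob} with $(\underline{u}-u_\eps)_+$, use $\nabla T(u_\eps)=\nabla\underline{u}$ on $\{\underline{u}>u_\eps\}$ and the exact equation $-\Delta_p\underline{u}=\lambda_1\underline{u}^{p-1}$, bound $\tilde{f}_\eps(\cdot,u_\eps)$ (resp. $g_\eps(\nabla\underline{u})$) from below by $\lambda_1\underline{u}^{p-1}$ via $\eps<\delta$ and \eqref{firstuseful} (resp. \eqref{useful}), and conclude by strict monotonicity of $\xi\mapsto|\xi|^{p-2}\xi$; the only cosmetic point is that you never use the sub-solution inequality of Lemma \ref{subsollemma}, only the eigenvalue equation for $\underline{u}=k\varphi_1$, exactly as the paper does.
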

\begin{proof}
By \eqref{defFG}, testing \eqref{auxprob} with $(\underline{u}-u_\eps)_+\in W^{1,p}_0(\Omega)$ yields
\begin{equation}
\label{comparison1}
\int_{\{\underline{u}\geq u_\eps\}}|\nabla u_\eps|^{p-2}\nabla u_\eps\cdot\nabla (\underline{u}-u_\eps)\dx
=\int_{\{\underline{u}\geq u_\eps\}}(\tilde{f}_\eps(\cdot,u_\eps)+g_\eps(\nabla\underline{u}))(\underline{u}-u_\eps) \dx.
\end{equation}
Two cases can now arise, depending on whether \eqref{fatzero} or \eqref{gatzero} is true. When \eqref{fatzero} holds, $\Vert\underline{u}\Vert_\infty<\delta$ by \eqref{firstuseful} and, through an argument similar to that made just above \eqref{tildefeps}, we obtain
\begin{equation}\label{comparisonf}
\begin{aligned}
&\int_{\{\underline{u}\geq u_\eps\}}(\tilde{f}_\eps(\cdot,u_\eps)+g_\eps(\nabla\underline{u}))(\underline{u}-u_\eps)\dx
\geq\int_{\{\underline{u}\geq u_\eps\}}
\tilde{f}_\eps(\cdot,u_\eps)(\underline{u}-u_\eps)\dx\\
&\geq\int_{\{\underline{u}\geq u_\eps\}}\left[\essinf_{|s-u_\eps|<\eps} \tilde{f}(\cdot,s)\right](\underline{u}-u_\eps)\dx
\geq\int_{\{\underline{u}\geq u_\eps\}} \left[\essinf_{s\in[\underline{u},2\delta)} f(s)\right](\underline{u}-u_\eps)\dx\\
&\geq\lambda_1\int_{\{\underline{u}\geq u_\eps\}}\underline{u}^{p-1}(\underline{u}-u_\eps)\dx,
\end{aligned}
\end{equation}
because $u_\eps(x)+\eps\leq\|\underline{u}\|_\infty+\delta<2\delta$ for almost every 
$x\in\{\underline{u}\geq u_\eps\}$. On the other hand, if \eqref{gatzero} is fulfilled then \eqref{useful} implies $\max\{\Vert\nabla\underline{u}\Vert_\infty,\eps\}<\delta$. Thus,
\begin{equation}\label{comparisong}
\begin{aligned}
&\int_{\{\underline{u}\geq u_\eps\}}(\tilde{f}_\eps(\cdot,u_\eps)+g_\eps(\nabla\underline{u}))(\underline{u}-u_\eps)\dx
\geq\int_{\{\underline{u}\geq u_\eps\}} g_\eps(\nabla\underline{u})(\underline{u}-u_\eps) \dx\\
& \geq\int_{\{\underline{u}\geq u_\eps\}}\left[\essinf_{\xi\in B_\eps(\nabla\underline{u})} g(\xi)\right](\underline{u}-u_\eps)\dx 
\geq\int_{\{\underline{u}\geq u_\eps\}}\left[\essinf_{\xi\in B_{2\delta}(0)} g(\xi)\right](\underline{u}-u_\eps)\dx\\
&\geq\int_{\{\underline{u}\geq u_\eps\}}\theta(\underline{u}-u_\eps)\dx\geq \lambda_1\int_{\{\underline{u}\geq u_\eps\}}\underline{u}^{p-1}(\underline{u}-u_\eps)\dx.
\end{aligned}
\end{equation}
In both cases, from \eqref{comparison1} and either \eqref{comparisonf} or \eqref{comparisong} it follows
\begin{equation*}
%\label{comparison2}
\begin{aligned}
\int_{\{\underline{u}\geq u_\eps\}} |\nabla u_\eps|^{p-2}\nabla u_\eps\cdot \nabla(\underline{u}-u_\eps)\dx 
& \geq\lambda_1\int_{\{\underline{u}\geq u_\eps\}}
\underline{u}^{p-1}(\underline{u}-u_\eps)\dx \\
& =\int_{\{\underline{u}\geq u_\eps\}}|\nabla \underline{u}|^{p-2}\nabla\underline{u}\cdot \nabla(\underline{u}-u_\eps)\dx,
\end{aligned}
\end{equation*}
whence
$$\int_{\{\underline{u}\geq u_\eps\}}(|\nabla \underline{u}|^{p-2}\nabla\underline{u}-|\nabla u_\eps|^{p-2}\nabla u_\eps)\cdot(\nabla\underline{u}-\nabla u_\eps)\dx\leq 0.$$
Due to the strict monotonicity of the $p$-Laplacian, this easily entails $u_\eps\geq \underline{u}$. 
\end{proof}
\begin{lemma}\label{reglemma}
Under the assumptions of Lemma \ref{comparisonlemma} and the same choice of $\delta$, any solution $u_\eps\in W^{1,p}_0(\Omega)$, with $0<\eps<\delta$, to \eqref{auxprob} lies in $C^{1,\beta}_0(\overline{\Omega})$. Moreover, the $C^{1,\beta}$-estimates do not depend on $\eps$.
%that is, there exists $K>0$, independent of $\eps\in(0,\delta)$, such that, for any $\eps\in(0,\delta)$, every $u_\eps$ solution to \eqref{auxprob} satisfies 
%$$ \|u_\eps\|_{C^{1,\beta}} \leq K. $$
\end{lemma}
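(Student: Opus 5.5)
Since $u_\eps$ solves \eqref{auxprob}, it satisfies
$$-\Delta_p u_\eps=\tilde f_\eps(\cdot,u_\eps)+g_\eps(\nabla u_\eps)\quad\mbox{in } W^{-1,p'}(\Omega).$$
Here we used Lemma \ref{comparisonlemma}: $u_\eps\ge\underline u$, so $T(u_\eps)=u_\eps$. The plan is to reduce this to a setting where the nonlinear regularity theory of Lieberman (combined with the singular-term version of Giacomoni--Schindler--Tak\'a\v{c}, or the framework of \cite{GM}) applies. That requires a right-hand side bounded by $C\,d^{-\gamma}$, with $C$ independent of $\eps$.

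\textbf{Step 1: uniform $W^{1,p}$ bound.} Testing \eqref{auxprob} with $u_\eps$ and using the coercivity estimate \eqref{coercive} with $\sigma$ fixed as in \eqref{sigmacond}, we get
$$c_0\|\nabla u_\eps\|_p^p\le\tilde C_\sigma\|\nabla u_\eps\|_p .$$
Since the constants in Lemmas \ref{fepsest}--\ref{gepsest} do not depend on $\eps\in(0,1)$, this gives $\|\nabla u_\eps\|_p\le M$ uniformly in $\eps$.

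\textbf{Step 2: uniform $L^\infty$ bound.} Lemma \ref{fepsest}, together with $u_\eps\ge\underline u\ge ld$ (Proposition \ref{distcomparison}), gives
$$\tilde f_\eps(\cdot,u_\eps)\le C(u_\eps^{p-1}+d^{-\gamma}).$$
By Proposition \ref{distsumm}, $d^{-\gamma}\in L^q(\Omega)$ for some $q>1$, and $q$ can be taken larger than $N/p$ only if $\gamma$ is small. So rather than rely on that, I would run a Moser iteration or De Giorgi truncation, testing with $(u_\eps-k)_+$ and powers of $u_\eps$. The singular term is controlled through the Hardy--Sobolev inequality (Proposition \ref{hardysobolev}) and the bound $d^{-\gamma}\le$ an $L^q$ function, while the convection term is bounded by
$$g_\eps(\nabla u_\eps)\le (L_g+\sigma)|\nabla u_\eps|^{p-1}+C'_\sigma .$$
A gradient term with growth $p-1$ is admissible in the Ladyzhenskaya--Ural'tseva / Serrin $L^\infty$ theory. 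This yields $\|u_\eps\|_\infty\le M'$, with $M'$ depending only on $M$ and the structural constants.

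\textbf{Step 3: $C^{1,\beta}$ estimate.} The right-hand side has the structure
$$|B(x,u,\nabla u)|\le C\bigl(d(x)^{-\gamma}+1+|\nabla u|^{p-1}\bigr)\quad\mbox{for } |u|\le M'.$$
Lieberman's boundary regularity, in the singular-weight version (the right-hand side is bounded by $C d^{-\gamma}$ plus natural gradient growth, with $\gamma<1$), then gives
$$u_\eps\in C^{1,\beta}_0(\overline\Omega),\qquad \|u_\eps\|_{C^{1,\beta}}\le K,$$
with $\beta$ and $K$ depending only on $N,p,\gamma,\Omega,M'$ and the structural constants, hence not on $\eps$.

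\textbf{Main obstacle.} I expect the main difficulty to be the joint presence of the singular weight $d^{-\gamma}$ and the convection term with critical growth $|\nabla u|^{p-1}$ in the regularity results. If no off-the-shelf theorem covers both, I would first use the $L^\infty$ bound and an interior or global $C^{1,\tau}$ estimate (treating $g_\eps(\nabla u_\eps)$ as an $L^\infty$ datum after a bootstrap) to show that $\nabla u_\eps$ is bounded uniformly in $\eps$. Then $g_\eps(\nabla u_\eps)\in L^\infty$ uniformly, and the problem reduces to
$$-\Delta_p u_\eps=h_\eps,\qquad 0\le h_\eps\le C d^{-\gamma},$$
to which the singular $C^{1,\beta}$ result used in \cite{GM} applies directly.
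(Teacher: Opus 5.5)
Your Step 1 is the paper's energy estimate, but Step 2 does not close as described. With the global bound $\tilde f_\eps(\cdot,u_\eps)\le C(u_\eps^{p-1}+d^{-\gamma})$ and the test function $(u_\eps-k)_+$, you must control $\int_{\{u_\eps\ge k\}}d^{-\gamma}(u_\eps-k)_+\dx$. Proposition \ref{hardysobolev} only bounds this by $K\|\nabla(u_\eps-k)_+\|_p$. That bound contains no power of $|\{u_\eps\ge k\}|$, so it gives no decay in $k$. H\"older's inequality with $d^{-\gamma}\in L^q$ gives the decay needed by a De Giorgi/Stampacchia (or Serrin) argument only when $q>N/p$, which is exactly the case you said you cannot assume. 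The missing observation is that the singular weight never appears on the support of the test function. Take $k\ge 1+\|\underline{u}\|_\infty$ and a.e.\ $x\in\Omega_k=\{u_\eps\ge k\}$. Every $s$ in the convolution window satisfies $s>u_\eps(x)-\eps\ge k-\eps>\|\underline{u}\|_\infty$, so $\tilde f(x,s)=f(s)$ with $s$ bounded away from $0$. Hence $\tilde f_\eps(\cdot,u_\eps)+g_\eps(\nabla u_\eps)\le A|\nabla u_\eps|^{p-1}+Bu_\eps^{p-1}+C$ on $\Omega_k$, with constants independent of $\eps$ and $k$. Young's and Sobolev's inequalities plus a Stampacchia-type lemma then give $\|u_\eps\|_\infty\le M$ uniformly in $\eps$.

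In Step 3 you do not identify a regularity result that handles the singular weight and the natural gradient growth at the same time. Lieberman's Theorem 1 requires $|B(x,s,\xi)|\le\Lambda(1+|\xi|)^p$ with bounded $\Lambda$, which $d^{-\gamma}$ violates. The singular $C^{1,\alpha}$ results (Giacomoni--Schindler--Tak\'a\v{c}, Hai) do not allow dependence on $\nabla u$. Your fallback is circular as written, since treating $g_\eps(\nabla u_\eps)$ as an $L^\infty$ datum presupposes the uniform gradient bound you want. A genuine bootstrap would need global nonlinear Calder\'on--Zygmund estimates that still have to cope with $d^{-\gamma}$ at $\partial\Omega$.

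The paper's idea is to move the singular part into the principal part.
\begin{enumerate}
\item Once $\|u_\eps\|_\infty\le M$, Lemmas \ref{fepsest}--\ref{gepsest} give $-\Delta_p u_\eps=b_\eps\,(d^{-\gamma}+|\nabla u_\eps|^{p-1})$ with $\|b_\eps\|_\infty\le\tilde C$ independent of $\eps$.
\item Let $U_\eps$ solve the linear problem $-\Delta U_\eps=b_\eps d^{-\gamma}$, $U_\eps=0$ on $\partial\Omega$. Hai's lemma gives $\|U_\eps\|_{C^{1,\alpha}(\overline{\Omega})}\le\tilde M$ uniformly in $\eps$.
\item Then $u_\eps$ solves $-\Div(|\nabla u|^{p-2}\nabla u-\nabla U_\eps)=b_\eps|\nabla u|^{p-1}$.
\item Lieberman's Theorem 1 now applies directly with $A(x,\xi)=|\xi|^{p-2}\xi-\nabla U_\eps(x)$, which is $\alpha$-H\"older in $x$ uniformly in $\eps$, and $B=b_\eps|\xi|^{p-1}$, which has natural growth. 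This yields the $\eps$-independent $C^{1,\beta}$ bound.
\end{enumerate}
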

\begin{proof}
Let $\sigma>0$ satisfy \eqref{sigmacond} and let $u_\eps$ be as above. Lemma \ref{comparisonlemma} yields $u_\eps\geq\underline{u}$. Using \eqref{coercive} we next get
$$0=\langle\Psi_\eps(u_\eps),u_\eps\rangle
\geq\left(1-\frac{L_f+\sigma}{\lambda_1}-\frac{L_g+\sigma}{\lambda_1^{1/p}}\right)
\|\nabla u_\eps\|_p^p-\tilde{C}_\sigma\|\nabla u_\eps\|_p\, .$$
So, there exists $\hat{C}>0$, independent of $\eps$, such that
\begin{equation}\label{energyest}
\|\nabla u_\eps\|_p \leq \hat{C}.
\end{equation}
Pick $k\geq \hat{k}:=1+\|\underline{u}\|_\infty$ and set $\Omega_k:=\{u_\eps\geq k\}$. We claim that for appropriate $A,B,C>0$, which do not depend on either $\eps$ or $k$, one has
\begin{equation}\label{generalgrowth}
\tilde{f}_\eps(x,u_\eps(x))+g_\eps(\nabla u_\eps(x))
\leq A|\nabla u_\eps(x)|^{p-1}+Bu_\eps(x)^{p-1}+C\quad\mbox{a.e.\,in}\;\;\Omega_k.
\end{equation}
In fact, via the same reasoning employed to prove \eqref{tildefeps}--\eqref{geps}, from 
$$u_\eps(x)-\eps\geq k-\eps>k-1\geq \|\underline{u}\|_\infty\quad\mbox{for almost all}\;\; x\in\Omega_k$$
it follows
\begin{equation*}
\begin{aligned}
\tilde{f}_\eps(\cdot,u_\eps)+g_\eps(\nabla u_\eps) 
& \leq\esssup_{|s-u_\eps|<\eps}\tilde{f}(\cdot,s)+\esssup_{|\xi-\nabla u_\eps|<\eps} g(\xi)=\esssup_{|s-u_\eps|<\eps} f(s)+\esssup_{|\xi-\nabla u_\eps|<\eps} g(\xi)\\
&\leq (L_f+\sigma)u_\eps^{p-1} + (L_g+\sigma)|\nabla u_\eps|^{p-1}+(u_\eps-\eps)^{-\gamma} + c_\sigma \\
&\leq (L_f+\sigma)u_\eps^{p-1}+(L_g+\sigma)|\nabla u_\eps|^{p-1}+ \|\underline{u}\|_\infty^{-\gamma}+c_\sigma
\end{aligned}
\end{equation*}
a.e.\,in $\Omega_k$, and \eqref{generalgrowth} holds. After testing \eqref{auxprob} with $(u_\eps-k)_+\in W^{1,p}_0(\Omega)$, inequality \eqref{generalgrowth} and Young's inequality with $\mu>0$ produce
\begin{equation}\label{degiorgiestabove}
\begin{aligned}
\int_{\Omega_k}|\nabla u_\eps|^p \dx 
& = \int_\Omega |\nabla u_\eps|^{p-2}\nabla u_\eps\cdot\nabla(u_\eps-k)_+\dx\\
& = \int_\Omega\left[\tilde{f}_\eps(\cdot,u_\eps)+g_\eps(\nabla u_\eps)\right](u_\eps-k)_+\dx\\
& \leq\int_{\Omega_k} \left(A|\nabla u_\eps|^{p-1}+Bu_\eps^{p-1}+C\right)u_\eps\dx\\
& \leq\mu A\int_{\Omega_k}|\nabla u_\eps|^p\dx+(C_\mu A+B)\int_{\Omega_k} u_\eps^p\dx
+C\int_{\Omega_k} u_\eps\dx\\
& \leq\mu A\int_{\Omega_k}|\nabla u_\eps|^p\dx +(C_\mu A+B+C)\int_{\Omega_k} u_\eps^{p^*}\dx\\
& \leq\mu A\int_{\Omega_k}|\nabla u_\eps|^p\dx+C_\mu^*(\|(u_\eps-k)_+\|_{p^*}^{p^*}
+ k^{p^*}|\Omega_k|)
\end{aligned}
\end{equation}
(note that $u_\eps\geq k>1$ in $\Omega_k$), where $C_\mu^*:=2^{p^*-1}(C_\mu A+B+C)$. By Sobolev's inequality we obtain 
\begin{equation}\label{degiorgiestbelow}
c\|(u_\eps-k)_+\|_{p^*}^p\leq\frac{1}{2}\int_{\Omega_k}|\nabla u_\eps|^p\dx
\end{equation}
for suitable $c>0$. Now, choose $\mu<\frac{1}{2A}$ and use \eqref{degiorgiestabove}--\eqref{degiorgiestbelow} to achieve
$$ c\|(u_\eps-k)_+\|_{p^*}^p \leq (1-\mu A)\int_{\Omega_k}|\nabla u_\eps|^p \dx \leq C_\mu^* (\|(u_\eps-k)_+\|_{p^*}^{p^*} + k^{p^*}|\Omega_k|). $$
Since $k\geq\hat{k}$ was arbitrary, Lemma 3.2 of \cite{CGL} gives a constant $M>0$ fulfilling
\begin{equation}\label{supest}
\|u_\eps\|_\infty \leq M.
\end{equation}
It should be noted that $M$ does not depend on $\eps$, because so is $\hat{C}$ in \eqref{energyest}. Thanks to \eqref{supest}, from Lemmas \ref{fepsest}--\ref{gepsest} it follows
\begin{equation}\label{generalgrowth2}
0\leq\tilde{f}_\eps(\cdot,u_\eps)+g_\eps(\nabla u_\eps)
\leq\tilde{C}\left(d^{-\gamma}+|\nabla u_\eps|^{p-1}\right),
\end{equation}
with appropriate $\tilde{C}>0$. Thus, the function
$$b_\eps(x):=\frac{\tilde{f}_\eps(x,u_\eps(x))+g_\eps(\nabla u_\eps(x))}{d(x)^{-\gamma} +|\nabla u_\eps(x)|^{p-1}}\, ,\quad x\in\Omega,$$
belongs to $L^\infty(\Omega)$ and $\|b_\eps\|_\infty\leq \tilde{C}$, while $u_\eps$ satisfies
\begin{equation}\label{hahnbanach}
\left\{
\begin{alignedat}{2}
-\Delta_p u_\eps & =b_\eps(x)\left(d(x)^{-\gamma}+|\nabla u_\eps(x)|^{p-1}\right)\quad && \mbox{in}\;\;\Omega,\\
u_\eps & =0\quad && \mbox{on} \;\; \partial\Omega.
\end{alignedat}
\right.
\end{equation}
Let $U_\eps\in W^{1,p}_0(\Omega)$ be the unique solution to the problem
\begin{equation*}
\left\{
\begin{alignedat}{2}
-\Delta U & =b_\eps(x)d(x)^{-\gamma}\quad && \mbox{in}\;\;\Omega,\\
U & =0\quad && \mbox{on}\;\;\Omega,
\end{alignedat}
\right.
\end{equation*}
which is directly provided by Minty-Browder's theorem \cite{F}, because Proposition \ref{hardysobolev} ensures that $b_\eps d^{-\gamma}\in W^{-1,p'}(\Omega)$. Due to \cite[Lemma 3.1]{H} one has $U_\eps\in C^{1,\alpha}_0(\overline{\Omega})$ and
\begin{equation}\label{C1est}
\Vert U_\eps\Vert_{C^{1,\alpha}(\overline{\Omega})}\leq\tilde{M},    
\end{equation}
where $\tilde{M}$ does not depend on $\eps$, as $\|b_\eps\|_\infty\leq \tilde{C}$ for any $\eps<\delta$. Moreover, through \eqref{hahnbanach} we see that $u_\eps$ solves the problem
\begin{equation*}
\left\{
\begin{alignedat}{2}
-\Div(|\nabla u|^{p-2}\nabla u-\nabla U_\eps(x)) & =b_\eps(x)|\nabla u|^{p-1}\quad &&\mbox{in}\;\;\Omega,\\
u & =0\quad &&\mbox{on}\;\;\partial\Omega.
\end{alignedat}
\right.
\end{equation*}
At this point,  setting
$$A(x,s,\xi):=|\xi|^{p-2}\xi-\nabla U_\eps(x),\quad B(x,s,\xi):=b_\eps(x)|\xi|^{p-1},\quad (x,s,\xi)\in\Omega\times\R\times\R^N,$$ 
and recalling that $\nabla U_\eps\in C^{0,\alpha}(\overline{\Omega})$, Theorem 1 of \cite{L} can be applied, which yields $u_\eps\in C^{1,\beta}_0(\overline{\Omega})$. Further, since both \eqref{supest} and \eqref{C1est} are uniform with respect to $\eps$, there exists $K>0$, independent of $\eps$, such that $\|u_\eps\|_{C^{1,\beta}(\overline{\Omega})}\leq K$ whatever $\eps\in(0,\delta)$.
\end{proof}
Choose $\eps:=\frac{1}{n}$, where $n\in\N$ fulfills $\frac{1}{n}<\delta$ and $\delta$ comes from \eqref{fatzero}--\eqref{gatzero}. To avoid cumbersome formulae, write $u_n$, $\tilde{f}_n$, $g_n$ for $u_{1/n}$, $\tilde{f}_{1/n}$, $g_{1/n}$ respectively. Lemma \ref{reglemma} and Ascoli-Arzelà's theorem give, along a sub-sequence when necessary, 
\begin{equation}\label{C1conv}
 u_n\to u\;\;\mbox{in}\;\; C^{1,\alpha}_0(\overline{\Omega})\;\;\mbox{for some}\;\;\alpha\in(0,\beta).   
\end{equation}
\begin{lemma}
%\label{weakconvlemma}
Suppose \ref{fbound}--\ref{fsing}, \ref{gbound}, \eqref{hypatzero}, and \eqref{hypatinfty} hold true. Then there exist $v,w\in L^q(\Omega)\cap W^{-1,p'}(\Omega)$, $1<q<\frac{1}{\gamma}$, with the following properties:
\begin{itemize}
\item The function $u$ given by \eqref{C1conv} solves the problem
\begin{equation}\label{weakprob}
\left\{
\begin{alignedat}{2}
-\Delta_p u & =v+w\quad && \mbox{in}\;\;\Omega, \\
u & =0\quad && \mbox{on}\;\;\Omega.
\end{alignedat}
\right.
\end{equation}
\item One has
\begin{equation}\label{vwdoublebound}
\underline{f}(u)\leq v\leq\overline{f}(u),\qquad\underline{g}(\nabla u)\leq w\leq \overline{g}(\nabla u)\, .
\end{equation}
\end{itemize}
\end{lemma}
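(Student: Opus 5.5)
The plan is to define $v_n:=\tilde{f}_n(\cdot,u_n)$ and $w_n:=g_n(\nabla u_n)$, extract weak limits $v,w$, pass to the limit in \eqref{auxprob}, and then identify the pointwise bounds \eqref{vwdoublebound} through the convergence \eqref{C1conv}. Note first that, by Lemma \ref{comparisonlemma}, $u_n\geq\underline{u}$, hence $T(u_n)=u_n$, so $G_{1/n}(u_n)=g_n(\nabla u_n)$, and also $\tilde{f}_n(x,u_n)$ only involves values $f(s)$ with $s\geq\underline{u}(x)-\frac1n$.

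\textbf{Uniform bounds.} By \eqref{generalgrowth2} together with $\|u_n\|_{C^{1,\beta}}\leq K$ (Lemma \ref{reglemma}), one has $0\leq v_n\leq \tilde{C}d^{-\gamma}+C$ and $0\leq w_n\leq C$ (Lemma \ref{gepsest} and the gradient bound). By Proposition \ref{distsumm}, $\{v_n\}$ is bounded in $L^q(\Omega)$ for every $1<q<\frac1\gamma$, and $\{w_n\}$ is bounded in $L^\infty(\Omega)$. Reflexivity gives, along a subsequence, $v_n\rightharpoonup v$ and $w_n\rightharpoonup w$ in $L^q(\Omega)$. Moreover, the domination $|v_n\phi|\leq(\tilde{C}d^{-\gamma}+C)|\phi|$ and Proposition \ref{hardysobolev} show that $v,w$ define elements of $W^{-1,p'}(\Omega)$. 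To pass to the limit I test \eqref{auxprob} with $\phi\in C^\infty_c(\Omega)$. The left side converges because $\nabla u_n\to\nabla u$ uniformly by \eqref{C1conv}, and the right side converges by weak convergence. Density, using the uniform Hardy-type bound, then extends the identity to $\phi\in W^{1,p}_0(\Omega)$, which gives \eqref{weakprob}.

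\textbf{Pointwise bounds.} This is the main step. Fix $x\in\Omega$; then $u_n(x)\to u(x)\geq\underline{u}(x)>0$. For any $\delta'>0$ and $n$ large, we have $|s-u(x)|<\delta'$ whenever $|s-u_n(x)|<\frac1n$, and eventually $s>\underline{u}(x)$ on that range, so $\tilde{f}(x,s)=f(s)$ there. Hence
$$\essinf_{|t-u(x)|<\delta'}f(t)\leq v_n(x)\leq \esssup_{|t-u(x)|<\delta'}f(t),$$
and the same argument with $\nabla u_n(x)\to\nabla u(x)$ gives the analogous bounds for $w_n(x)$ in terms of $g$. Thus, for a.e.\ $x$,
$$\liminf_n v_n(x)\geq\underline{f}(u(x))\quad\text{and}\quad\limsup_n v_n(x)\leq\overline{f}(u(x)),$$
and similarly for $w_n$.

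To transfer these bounds to the weak limits, I would use that for weakly convergent sequences in $L^q$ one has $\liminf_n v_n\leq v\leq\limsup_n v_n$ a.e. This follows from Mazur's lemma: convex combinations of tails converge strongly, hence a.e. along a subsequence, and they remain squeezed between $\inf_{k\geq m}v_k$ and $\sup_{k\geq m}v_k$. Alternatively, one can test against nonnegative $\chi_E$ and apply Fatou's lemma, which is legitimate thanks to the domination $\tilde{C}d^{-\gamma}+C\in L^1$. In either case this yields \eqref{vwdoublebound}. The delicate points are verifying that the truncation by $\underline{u}$ becomes inactive near $u(x)$, which uses $u\geq\underline{u}>0$ in $\Omega$ and the fact that $\underline{u}$ is continuous, and checking the measurability of $\underline{f}(u)$ and $\overline{g}(\nabla u)$. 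For the latter, note that $\underline{f}$ and $\underline{g}$ are lower semicontinuous and $\overline{f}$, $\overline{g}$ are upper semicontinuous, so their compositions with the continuous functions $u$ and $\nabla u$ are Borel measurable.
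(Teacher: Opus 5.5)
Apart from how the pointwise bounds are identified, your argument is the paper's. It dominates $\tilde f_n(\cdot,u_n)$ and $g_n(\nabla u_n)$ by a multiple of $d^{-\gamma}$, takes weak limits in $L^q(\Omega)$ and $W^{-1,p'}(\Omega)$, and passes to the limit in \eqref{auxprob}. Moving pointwise $\liminf/\limsup$ bounds to the weak limits via Mazur or reverse Fatou is a legitimate alternative to the paper's order of limits. The paper fixes $\mu$ and, for $n>\nu$, sandwiches $\tilde f_n(\cdot,u_n)$ between the $n$-independent functions $\essinf_{|s-u|<\mu}\tilde f(\cdot,s)$ and $\esssup_{|s-u|<\mu}\tilde f(\cdot,s)$. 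It then tests with $\varphi\in C^\infty_c(\Omega)_+$ and only afterwards lets $\mu\to0^+$.

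The step that fails is your claim that, for $n$ large, every $s$ with $|s-u_n(x)|<\frac1n$ satisfies $s>\underline u(x)$. This holds only where $u(x)>\underline u(x)$. Nothing in the argument makes the set $\{u=\underline u\}$ $\L^N$-null, and there you may have, e.g., $u_n(x)=\underline u(x)$ for all $n$. Then part of every window lies below $\underline u(x)$, where $\tilde f(x,\cdot)\equiv f(\underline u(x))=f(u(x))$. So the convolution gives the \emph{point value} $f(u(x))$ a positive weight (one half for an even mollifier). Essential infima and suprema ignore point values, and the hypotheses allow $f(s_0)<\underline f(s_0)$ (or $f(s_0)>\overline f(s_0)$) for $s_0$ in an $\L^1$-null set. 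At such points your inequality $\essinf_{|t-u(x)|<\delta'}f(t)\leq v_n(x)$ is false. On $\{u=\underline u\}$ you only obtain $\min\{f(u),\underline f(u)\}\le v\le\max\{f(u),\overline f(u)\}$.

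The missing ingredient is a null-set argument. Let $Z\subseteq\R^+$ be the set of non-Lebesgue points of $f$. Then $|Z|_1=0$ and $\underline f(s)\le f(s)\le\overline f(s)$ for $s\notin Z$. Since $\essinf_{|t-s|<\mu}f(t)\le\underline f(s)$ and $\esssup_{|t-s|<\mu}f(t)\ge\overline f(s)$ for every $\mu>0$, your sandwich for $v_n(x)$ holds at every $x\in\{u=\underline u\}$ with $\underline u(x)\notin Z$. The leftover set lies in $\underline u^{-1}(Z)$, which is $\L^N$-null. To see this, apply \cite[Proposition 2.4]{GM} to $\underline u$ exactly as the paper applies it to $u$ on $\Omega_f$ in the proof of the final theorem; here $-\Delta_p\underline u=\lambda_1\underline u^{p-1}\in L^\infty(\Omega)$. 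This gives $\Delta_p\underline u=0$ a.e.\ on $\underline u^{-1}(Z)$, whereas $-\Delta_p\underline u=\lambda_1\underline u^{p-1}>0$ in $\Omega$.

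The $g$-part is unaffected, since $g$ is not truncated. The paper's own step \eqref{neglecttrunc} passes over this same point, so the omission is shared. Still, your claim is false as written and the argument needs this patch to yield \eqref{vwdoublebound}.
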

\begin{proof}
Because of \eqref{C1conv} the sequence $\{\|\nabla u_n\|_\infty\}_n$ turns out bounded. Thus, after possibly enlarging $\tilde{C}$, \eqref{generalgrowth2} entails
\begin{equation}\label{generalgrowth3}
0\leq\min\{\tilde{f}_n(\cdot,u_n),g_n(\nabla u_n)\}
\leq\max\{\tilde{f}_n(\cdot,u_n),g_n(\nabla u_n)\}
\leq\tilde{f}_n(\cdot,u_n)+g_n(\nabla u_n)
\leq\tilde{C} d^{-\gamma}
\end{equation}
for all $n\in\N$, i.e., both $\{\tilde{f}_n(\cdot,u_n)\}_n$ and $\{g_n(\nabla u_n)\}_n$ are bounded in $L^q(\Omega)$ provided $q\in (1,\frac{1}{\gamma})$; see Proposition \ref{distsumm}. Thanks to Proposition \ref{hardysobolev}, \eqref{generalgrowth3} also ensures the uniform boundedness of the linear functionals
$$\phi\mapsto
\int_\Omega\tilde{f}_n(\cdot,u_n)\phi\,{\rm d}x,\quad
\phi\mapsto
\int_\Omega g_n(\nabla u_n)\phi\,{\rm d}x,\quad n\in\N,$$
on $W^{1,p}_0(\Omega)$. Consequently, by Kakutani's and Banach-Alaoglu-Bourbaki's theorems \cite{Br}, there exist $v,w\in L^q(\Omega)\cap W^{-1,p'}(\Omega)$ such that, up to sub-sequences,
\begin{equation}\label{weakconvs}
\begin{aligned}
& \tilde{f}_n(\cdot,u_n)\rightharpoonup v\, ,\quad 
g_n(\nabla u_n)\rightharpoonup w\quad\mbox{in} \;\; L^q(\Omega),\\
& \tilde{f}_n(\cdot,u_n)\stackrel{*}{\rightharpoonup} v\, ,\quad 
g_n(\nabla u_n)\stackrel{*}{\rightharpoonup} w\quad\mbox{in}\;\; W^{-1,p'}(\Omega).
\end{aligned}
\end{equation}
From \eqref{C1conv} it follows $u_n\to u$ in $W^{1,p}_0(\Omega)$, which produces $\Delta_p u_n \to \Delta_p u$ in $W^{-1,p'}(\Omega)$; cf. $({\rm P}_6)$ of Section \ref{prel}. Since Theorem \ref{exauxprob} gives
$$\langle-\Delta_p u_n,\phi\rangle=\langle\tilde{f}_n(x,u_n),\phi\rangle+ 
\langle g_n(\nabla u_n),\phi \rangle,\quad\phi\in W^{1,p}_0(\Omega),$$
for every $n\in\N$, letting $n\to+\infty$ and using \eqref{weakconvs} we see that $u$ solves \eqref{weakprob}. So, it remains to verify \eqref{vwdoublebound}. With this aim, pick any $\mu>0$. By \eqref{C1conv}, there exists $\nu\in\N$ such that
\begin{equation}\label{unifconv}
\frac{1}{n}<\frac{\mu}{2},\quad\|u_n-u\|_\infty<\frac{\mu}{2},\quad\mbox{and}\quad \|\nabla u_n-\nabla u\|_\infty < \frac{\mu}{2}\quad\forall\, n>\nu.
\end{equation}
Proceeding as already done to prove \eqref{tildefeps}, one has
\begin{equation*}
\essinf_{|s-u|<\mu}\tilde{f}(\cdot,s)\leq\essinf_{|s-u_n|<\frac{\mu}{2}}\tilde{f}(\cdot,s)
\leq\tilde{f}_n(\cdot,u_n)\leq\esssup_{|s-u_n|<\frac{\mu}{2}}\tilde{f}(\cdot,s)
\leq \esssup_{|s-u|<\mu}\tilde{f}(\cdot,s)
\end{equation*}
for all $n>\nu$. Hence, 
\begin{equation*}
\int_\Omega\left[\essinf_{|s-u|<\mu}\tilde{f}(\cdot,s)\right]\varphi\dx
\leq\int_\Omega\tilde{f}_n(\cdot,u_n)\,\varphi\dx
\leq\int_\Omega\left[\esssup_{|s-u|<\mu}\tilde{f}(\cdot,s)\right]\varphi\dx
\end{equation*}
whatever $\varphi\in C^\infty_c(\Omega)_+$ and, in view of \eqref{weakconvs},
\begin{equation*}
\int_\Omega\left[\essinf_{|s-u|<\mu}\tilde{f}(\cdot,s)\right]\varphi\dx
\leq\int_\Omega v\,\varphi\dx
\leq\int_\Omega\left[\esssup_{|s-u|<\mu}\tilde{f}(\cdot,s)\right]\varphi\dx.
\end{equation*}
This clearly entails
\begin{equation}\label{vdoublebound}
\essinf_{|s-u|<\mu}\tilde{f}(\cdot,s)\leq v\leq\esssup_{|s-u|<\mu}\tilde{f}(\cdot,s).
\end{equation}
Next, notice that
\begin{equation}\label{neglecttrunc}
\tilde{f}(x,]-s,s[)\subseteq f([\underline{u}(x),s[)\quad\forall\, s>\underline{u}(x).
\end{equation}
Since, by Lemma \ref{subsollemma},
$$u(x)=\lim_{n\to+\infty} u_n(x)\geq\underline{u}(x),$$
\eqref{neglecttrunc} is valid with $s=u(x)+\mu$. Now, gathering \eqref{vdoublebound}--\eqref{neglecttrunc} together and letting $\mu\to 0^+$ we achieve
$$\underline{f}(u)\leq v\leq\overline{f}(u).$$
Likewise, thanks to \eqref{unifconv} and \eqref{weakconvs} again, one easily sees that
\begin{equation*}
%\label{wdoublebound}
\essinf_{|\xi-\nabla u|<\mu} g(\xi)\leq w\leq\esssup_{|\xi-\nabla u|<\mu} g(\xi),
\end{equation*}
whence the conclusion follows as $\mu\to 0^+$.
\end{proof}
\begin{thm}
Let $({\rm H}_f)$, $({\rm H}_g)$, \eqref{hypatzero}, and \eqref{hypatinfty} be satisfied. Then the function $u\in C^{1,\alpha}_0(\overline{\Omega})$ coming from \eqref{C1conv} is a strong solution to \eqref{prob}.
\end{thm}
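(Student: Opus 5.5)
Having $u\in C^{1,\alpha}_0(\overline{\Omega})$ with $u\geq\underline{u}>0$ in $\Omega$ and \eqref{weakprob}--\eqref{vwdoublebound}, the plan is to establish three things. First, $-\Delta_p u$ is a genuine $L^1_\loc$ function with $|\nabla u|^{p-2}\nabla u\in W^{1,1}_\loc(\Omega;\R^N)$. Second, $v=f(u)$ a.e. Third, $w=g(\nabla u)$ a.e.

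For the regularity, $v+w\in L^q(\Omega)$ with $q>1$, and on any $\Omega'\Subset\Omega$ one has $d\geq c>0$. By \eqref{generalgrowth3}, $v,w\in L^\infty_\loc(\Omega)$. Since $u\in C^{1,\alpha}$ and the right-hand side is locally bounded, I would invoke known second-order regularity for the $p$-Laplacian with $L^\infty_\loc$ (or $L^2_\loc$) data, as in \cite[Proposition 2.4]{GM}, to get $|\nabla u|^{p-2}\nabla u\in W^{1,2}_\loc\subseteq W^{1,1}_\loc$. Then $-\Delta_p u=v+w$ a.e. in $\Omega$, and the same proposition gives strong locality of $-\Delta_p$ from $W^{1,p}$ into $L^2_\loc$.

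To identify $v$, I split $\Omega$ into $\Omega\setminus u^{-1}(\D_f)$ and $u^{-1}(\D_f)$. Off $u^{-1}(\D_f)$, $f$ is continuous at $u(x)$, so $\underline f(u)=\overline f(u)=f(u)$ and \eqref{vwdoublebound} forces $v=f(u)$. On $u^{-1}(\D_f)$, write it as $\bigcup_{c\in\D_f}u^{-1}(c)$. Strong locality gives $\Delta_p u=0$ a.e. on each level set, but $\D_f$ may be uncountable, so I would instead use the $N=1$ type argument: $\D_f$ is $\L^1$-null, hence by \cite{SV}/Theorem~\ref{localitythm} applied to $u\in C^1$ we get $\nabla u=0$ a.e. on $u^{-1}(\D_f)$. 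Applying Theorem~\ref{localitythm} to the components $\varphi_i=|\nabla u|^{p-2}\partial_iu$, which are constant ($=0$) on that set, gives $\Delta_p u=0$ a.e. there. Thus $v+w=0$ a.e. on $u^{-1}(\D_f)$. Since $v,w\geq0$ (the lower bounds are nonnegative), both vanish. Hence $\underline f(u)=0$, and by \ref{fzeros} $f(u)=0=v$ there. The same reasoning also gives $w=0$ there, and $\underline g(\nabla u)\le w=0$ with \ref{gzeros} yields $g(\nabla u)=0=w$ on that set, which is consistent.

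For $w$, set $D:=(\nabla u)^{-1}(\D_g)$. Off $D$, continuity of $g$ at $\nabla u$ plus \eqref{vwdoublebound} gives $w=g(\nabla u)$. On $D$, I use Example~\ref{example2}, which requires $\Phi=|\nabla u|^{p-2}\nabla u\in W^{1,1}_\loc$ (only locally in $\Omega$; I would localize by cutoff or note the proofs of Theorems~\ref{localitythm}--\ref{nullproperty} are local) and $|(\D_g)_i|_1=0$ from \ref{gdisc}. This gives $\Delta_pu=0$ a.e. in $D$, so $v+w=0$ and hence $w=0=v$ a.e. in $D$. Then $\underline g(\nabla u)\le 0$ forces $\underline g(\nabla u)=0$, and \ref{gzeros} gives $g(\nabla u)=0=w$. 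Symmetrically, $\underline f(u)=0$ gives $f(u)=0=v$ on $D$.

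Combining these, $-\Delta_p u=f(u)+g(\nabla u)$ a.e., with $u>0$, so $u$ is a strong solution. The main obstacle is the second-order regularity $|\nabla u|^{p-2}\nabla u\in W^{1,1}_\loc$ together with justifying the use of Theorem~\ref{nullproperty} near $\nabla u=0$ (where $\Psi$ fails to be Lipschitz). Theorem~\ref{nullproperty} already handles this by excising the origin, and the point $\{0\}$ contributes only a null projection.
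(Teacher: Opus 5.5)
\textbf{Gap: the step on $(\nabla u)^{-1}(\D_g)$ relies on Theorem \ref{nullproperty}, which is false.} Otherwise your proof follows the paper's, with two harmless differences. The paper gets $|\nabla u|^{p-2}\nabla u\in W^{1,2}_\loc(\Omega)$ from \cite[Theorem 2.1]{CM}, not from \cite[Proposition 2.4]{GM}. On $u^{-1}(\D_f)$ you replace the paper's appeal to \cite{GM} by a self-contained argument, which is correct: Theorem \ref{localitythm} applied to $u$ gives $\nabla u=0$ a.e.\ there, so $\Phi:=|\nabla u|^{p-2}\nabla u$ vanishes on a full-measure subset and $\Div\Phi=0$ a.e. The problem is the step on $\Omega_g:=(\nabla u)^{-1}(\D_g)$, which you base, exactly as the paper does, on Example \ref{example2} and hence on Theorem \ref{nullproperty}. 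A counterexample: take $N=M=2$, $D:=\mathcal{C}\times\mathcal{C}$ with $\mathcal{C}$ the ternary Cantor set (translated if you want ${\rm dist}(0,D)>0$), and the linear map $\Lambda(x,y):=(x+y,0)$. Then $D_1=D_2=\mathcal{C}$ are $\L^1$-null, but $\Lambda(D)_1=\mathcal{C}+\mathcal{C}=[0,2]$. The error is in \eqref{measureest}. Each interval $I^i_n$ is a factor of many boxes $Q_k$, so $\sum_k\diam(Q_k)$ is not bounded by $\sum_i\sum_n|I^i_n|_1$. Covering $\mathcal{C}$ by its $2^m$ generation-$m$ intervals gives total length $2(2/3)^m$, yet produces $4^m$ boxes meeting $D$, of total diameter of order $(4/3)^m$. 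For $p\neq2$, each component $|\xi|^{p-2}\xi_i$ of $\Psi(\xi):=|\xi|^{p-2}\xi$ mixes all coordinates of $\xi$. So nullity of the projections $(\D_g)_i$ tells you nothing about $\Psi(\D_g)_i$. The difficulty you single out, that $\Psi$ is not Lipschitz at $0$, is not the real one.

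\textbf{Repair.} The claim $\Delta_pu=0$ a.e.\ in $\Omega_g$ is still true, and you can prove it with tools you already use by working with $\nabla u$ instead of $\Phi$. The set $U:=\{x\in\Omega:\nabla u(x)\neq0\}$ is open, and on it $\nabla u=|\Phi|^{p'-2}\Phi$, where $\zeta\mapsto|\zeta|^{p'-2}\zeta$ is smooth on $\R^N\setminus\{0\}$. Since $\Phi$ is continuous and lies in $W^{1,2}_\loc(\Omega;\R^N)$, the chain rule gives $\nabla u\in W^{1,2}_\loc(U;\R^N)$. Each $\partial_ju$ maps $\Omega_g$ into the null set $(\D_g)_j$. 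Hence Remark \ref{degiorgietal} (Theorem \ref{localitythm} in every direction, after a cutoff) gives $D^2u=0$ a.e.\ in $\Omega_g\cap U$, and so $\Jac\Phi=D\Psi(\nabla u)\,D^2u=0$ a.e.\ there. On $\Omega_g\setminus U$ you have $\Phi=0$ pointwise, and your own argument for $u^{-1}(\D_f)$ gives $\partial_i\varphi_i=0$ a.e.\ there. Thus $\Div\Phi=0$ a.e.\ in $\Omega_g$, and the rest of your reasoning (and the paper's) goes through unchanged.
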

\begin{proof}
As already done before for \eqref{generalgrowth3}, we get $\overline{f}(u)+\overline{g}(\nabla u)\leq\tilde{C} d^{-\gamma}$ provided $\tilde{C}$ is big enough. Consequently, from
$$0\leq\min\{\underline{f}(u),\underline{g}(\nabla u)\}\leq
\max\{\overline{f}(u),\overline{g}(\nabla u)\}\leq
\overline{f}(u)+\overline{g}(\nabla u)\leq\tilde{C} d^{-\gamma},$$
$d^{-\gamma}\in L^\infty_\loc(\Omega)$, and \eqref{vwdoublebound} it follows $v,w\in L^\infty_\loc(\Omega)$. Recalling that $u$ weakly solves \eqref{weakprob},  Theorem 2.1 of \cite{CM} yields $|\nabla u|^{p-2}\nabla u\in W^{1,2}_\loc(\Omega)$, namely $u$ is actually a strong solution. It remains to verify that $v=f(u)$ and $w=g(\nabla u)$.

Let $\Omega_f:=u^{-1}(\D_f)$. Via \ref{fdisc} and \cite[Proposition 2.4]{GM} we arrive at $\Delta_p u=0$ a.e.\,in $\Omega_f$. By \eqref{weakprob}--\eqref{vwdoublebound} this entails
$$0\leq\underline{f}(u(x))\leq v(x)\leq v(x)+w(x)=-\Delta_p u(x)=0 \quad\mbox{for almost all}\;\; x\in\Omega_f.$$
Thus, \ref{fzeros} implies $f(u)=0=v$ a.e.\,in $\Omega_f$. On the other hand, thanks to \eqref{vwdoublebound}, one has $v(x)=f(u(x))$ for almost all $x\in\Omega\setminus\Omega_f$, whence $v=f(u)$.

Put $\Omega_g:=(\nabla u)^{-1}(\D_g)$. Through \ref{gdisc} and Theorem \ref{nullproperty} (see also Example \ref{example2}) we infer
$$|(\Phi(u)(\Omega_g))_i|_1=0,\quad i=1,\ldots,N,$$
where $\Phi(u):=|\nabla u|^{p-2}\nabla u$. From Corollary \ref{nulldiv} (cf. Example \ref{example2} again) it thus follows $\Delta_p u = \Div(\Phi(u))=0$ a.e.\,in $\Omega_g$.
Because of \eqref{weakprob}--\eqref{vwdoublebound} this produces
$$0\leq\underline{g}(\nabla u(x))\leq w(x)\leq v(x)+w(x)=-\Delta_p u(x)=0 \quad\mbox{for almost all} \;\; x\in\Omega_g,$$
which actually means $g(\nabla u)=0=w$ a.e.\,in $\Omega_g$ due to \ref{gzeros}. On the other hand, \eqref{vwdoublebound} guarantees that $w(x)=g(\nabla u(x))$ for almost all $x\in\Omega\setminus\Omega_g$. So, $w=g(\nabla u)$.
\end{proof}
\section*{Acknowledgments}
\noindent
%This study was partly funded by: Research project of MIUR (Italian Ministry of Education, University and Research) Prin 2022 {\it Nonlinear differential problems with applications to real phenomena} (Grant No. 2022ZXZTN2).
The authors are members of the {\em Gruppo Nazionale per l'Analisi Matematica, la Probabilit\`a e le loro Applicazioni} (GNAMPA) of the {\em Istituto Nazionale di Alta Matematica} (INdAM). \\
The first author acknowledges the support of the INdAM-GNAMPA project ``Classificazione delle soluzioni di equazioni evolutive non locali'' (CUP E53C25002010001).

\end{document}